\documentclass[twoside, 10pt]{article}
\usepackage{amssymb, amsmath, mathrsfs, amsthm}
\usepackage{graphicx}
\usepackage{color}
\usepackage{float, caption, subcaption}


\newcommand{\bd}{\begin{description}}
\newcommand{\ed}{\end{description}}
\newcommand{\bi}{\begin{itemize}}
\newcommand{\ei}{\end{itemize}}
\newcommand{\be}{\begin{enumerate}}
\newcommand{\ee}{\end{enumerate}}
\newcommand{\beq}{\begin{equation}}
\newcommand{\eeq}{\end{equation}}
\newcommand{\beqs}{\begin{eqnarray*}}
\newcommand{\eeqs}{\end{eqnarray*}}

\newcommand{\ceil}[1]{\left\lceil #1 \right\rceil}

\definecolor{DarkGreen}{rgb}{0.2, 0.6, 0.3}

\newcommand{\pp}{\prime \prime}

\newtheorem{theorem}{Theorem}
\newtheorem{conjecture}{Conjecture}

\newtheorem{lemma}{Lemma}

\newtheorem{case}{Case}
\newtheorem{subcase}{Subcase}[case]
\newtheorem{claim}{Claim}
\newtheorem{fact}{Fact}

\setcounter{case}{0} \setcounter{claim}{0}

\begin{document}
\title{\textbf{Gallai-Ramsey numbers for books\footnote{Supported by the National Science Foundation of China
		(Nos. 11601254, 11551001, 11161037, and 11461054) and the Science
		Found of Qinghai Province (Nos. 2016-ZJ-948Q, and 2014-ZJ-907).}}}
\author{Jinyu Zou\footnote{School of Computer, Qinghai Normal University, Xining, Qinghai 810008, China} \footnote{Department of Basic Research, Qinghai University, Xining, Qinghai 810008, China}, Yaping Mao\footnote{School of Mathematics and Statistis, Qinghai Normal University, Xining, Qinghai 810008, China}, Colton Magnant\footnote{Department of Mathematical Sciences, Georgia Southern University, Statesboro, GA, 30460, USA},\\ Zhao Wang\footnote{School of Mathematical Sciences, Beijing Normal University, Beijing 100875, China. {\tt wangzhao380@yahoo.com}}, 
Chengfu Ye\footnotemark[2]}

\maketitle

\begin{abstract}
Given a graph $G$ and a positive integer $k$, the \emph{Gallai-Ramsey number} is defined to be the minimum number of vertices $n$ such that any
$k$-edge coloring of $K_n$ contains either a rainbow (all different colored) triangle
or a monochromatic copy of $G$. In this paper, we obtain general upper and lower bounds
on the Gallai-Ramsey numbers for books $B_{m} = K_{2} + \overline{K_{m}}$ and prove sharp results for $m \leq 5$.
\end{abstract}

\section{Introduction}

In this work, we consider only edge-colorings of graphs. A coloring of a graph $G$ is called
\emph{rainbow} if no two edges in $G$ have the same color.

Edge colorings of complete graphs that contain no rainbow triangle have very interesting and somewhat surprising structure. In 1967, Gallai \cite{Gallai} examined this structure
under the guise of transitive orientations (a translation of this paper is available in \cite{RamirezReed}).
In honor of Gallai's result (Theorem~\ref{Thm:G-Part}), such colorings are are called \emph{Gallai-colorings}. The result was restated in \cite{GyarfasSimonyi} in the terminology of graphs and can also be traced back
to \cite{CameronEdmonds}. For the following statement, a trivial partition is a partition into only one part.

\begin{theorem}{\upshape \cite{CameronEdmonds, Gallai, GyarfasSimonyi}}\label{Thm:G-Part}
In any coloring of a complete graph containing no rainbow
triangle, there exists a nontrivial partition of the vertices (called a Gallai-partition) such
that there are at most two colors on the edges between the parts and only one color on
the edges between each pair of parts.
\end{theorem}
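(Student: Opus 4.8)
The plan is to argue by induction on the number $n$ of vertices of $K_n$. The cases $n\le 2$ are immediate (for $n=2$ the two singletons form the partition), and if the coloring uses only one color the partition into singletons works, so assume $n\ge 3$ and that at least two colors appear. One clean tool will be used repeatedly: for a color $i$ let $\overline{G_i}$ be the spanning subgraph of $K_n$ consisting of all edges \emph{not} colored $i$; if $\overline{G_i}$ is disconnected then the vertex sets of its components give a partition into at least two parts in which every between-part edge is colored $i$ (any other color would put that edge into $\overline{G_i}$ and join two of its components), and this is already a Gallai-partition. So I may assume that each $\overline{G_i}$ is connected; in particular no vertex is incident to edges of only one color.

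For the inductive step I would delete a vertex $v$ and apply the induction hypothesis to $K_n-v$, obtaining a Gallai-partition $\{P_1,\dots,P_t\}$ of $K_n-v$ with $t\ge2$ whose between-part edges use only colors from a two-element set, say $\{1,2\}$, with one color between each pair of parts. Reattaching $v$, I would split into cases on the colors of the edges from $v$ to the parts. \textbf{Case 1:} $v$ sends two distinct colors into a single part $P_j$, say $c(vu)\ne c(vw)$ with $u,w\in P_j$. For any vertex $z$ in another part $P_k$ the edges $zu$ and $zw$ both carry the single color between $P_j$ and $P_k$, so comparing the non-rainbow triangles $vuz$ and $vwz$ pins down $c(vz)$; doing this over all parts shows that $v$ can typically be absorbed into $P_j$, and the residual sub-cases (where the two colors incident to $v$ coincide with $1,2$) need a little more care but still produce a Gallai-partition of $K_n$ whose between-part colors lie in a two-element set. \textbf{Case 2:} $v$ sends a single color into each part. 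Then contracting every $P_j$ to a point and keeping $v$ yields a coloring of $K_{t+1}$ with no rainbow triangle (any such triangle would lift to one of $K_n$); if $t+1<n$ the induction hypothesis applies to $K_{t+1}$ and the resulting Gallai-partition lifts back to one of $K_n$. The leftover possibility is $t+1=n$, i.e.\ $\{P_1,\dots,P_t\}$ is the all-singletons partition and hence $K_n-v$ uses only colors $1$ and $2$; a direct argument then shows $v$ is incident to at most one color $c\notin\{1,2\}$, and that the set consisting of $v$ together with all vertices receiving color $c$ from $v$ is seen monochromatically by every outside vertex, so it may serve as one part with the remaining vertices as singletons.

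The step I expect to be the main obstacle is the reattachment in Case 1: the forcing of $c(vz)$ breaks into several sub-cases according to how the two colors incident to $v$ sit relative to the between-part colors $\{1,2\}$, and the colors must be tracked carefully so that the final partition provably uses at most two colors between its parts. The contraction and lifting in Case 2, and the all-singletons degeneracy, are comparatively routine, as is the remaining triangle-chasing.
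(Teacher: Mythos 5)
This statement is a classical result that the paper only cites (Gallai; Cameron--Edmonds; Gy\'arf\'as--Simonyi) and does not prove, so there is no in-paper argument to compare against; I am evaluating your proposal on its own terms. Your preliminary reduction (if some $\overline{G_i}$ is disconnected, its components already form a Gallai partition), your Case~2, and the all-singletons degeneracy are all correct. The problem is Case~1, and the sub-cases you defer with ``a little more care'' are not a technicality --- they are where the entire content of the theorem lives, and the inductive step as designed does not go through there.

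Concretely, the forcing argument $c(vz)=\gamma$ from the triangles $vuz$, $vwz$ only works when the between-part color $\gamma$ of $P_jP_k$ differs from \emph{both} colors $a=c(vu)$, $b=c(vw)$; if $\gamma=a$, the triangle $vuz$ gives no information and $c(vz)$ may equal $b$, so $v$ cannot join $P_j$, and $\{v\}$ cannot be added as a new part either (it sees $P_j$ in two colors). Here is a $K_5$ on $\{v,u,w,z,y\}$ showing the repair cannot be local: set $c(uw)=3$, $c(uz)=c(wz)=c(uy)=c(wy)=1$, $c(zy)=2$, $c(vu)=1$, and $c(vw)=c(vz)=c(vy)=3$. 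One checks all ten triangles are non-rainbow. Deleting $v$, the induction hypothesis may return the Gallai partition $P_1=\{u,w\}$, $P_2=\{z\}$, $P_3=\{y\}$ with between-part colors $\{1,2\}$; then $v$ sends colors $1$ and $3$ into $P_1$, both other parts fall into your unresolved sub-case, and indeed $c(vz)=3$ blocks absorption. The unique nontrivial Gallai partition of this $K_5$ is $\{\{z,y\},\{u\},\{w\},\{v\}\}$ with between-part colors $\{1,3\}$: it \emph{merges} $P_2$ with $P_3$, \emph{splits} $P_1$, and uses a different pair of colors than the inherited partition. So completing Case~1 requires producing an essentially unrelated partition, not adjusting the inherited one; equivalently, you would need the strengthened hypothesis that $K_n-v$ admits a Gallai partition \emph{compatible with $v$}, which is tantamount to the theorem itself and is not supplied by your induction. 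To close the gap you need either a stronger inductive statement or a different strategy altogether (e.g., induction on the number of colors, or the modular-decomposition argument underlying the cited proofs).
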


For ease of notation, we often refer to a Gallai-colored graph as \emph{G-colored} and a Gallai-partition as a \emph{G-partition}. The induced subgraph of a G-colored complete graph constructed by selecting a single vertex from each part of a G-partition is called a \emph{reduced graph}. By Theorem~\ref{Thm:G-Part}, the reduced graph is a colored complete graph using at most $2$ colors.

Given two graphs $G$ and $H$, let $R(G, H)$ denote the $2$-color Ramsey number for finding a monochromatic $G$ or $H$, that is, the minimum number of vertices $n$ needed so that every red-blue coloring of $K_{n}$ contains either a red copy of $G$ or a blue copy of $H$. Although the reduced graph of a Gallai partition uses only two colors, the original Gallai-colored complete graph could certainly use more colors. With this in mind, we consider the following generalization of the Ramsey numbers. Given two graphs $G$ and $H$, the \emph{general $k$-colored Gallai-Ramsey number} $gr_k(G:H)$ is defined to be the minimum integer $m$ such that every $k$-coloring of the complete graph on $m$ vertices contains either a rainbow copy of $G$ or a monochromatic copy of $H$. With the additional restriction of forbidding the rainbow copy of $G$, it is clear that $gr_k(G:H)\leq R_k(H)$ for any $G$.

Although the perspective of forbidding a rainbow triangle and looking for monochromatic subgraphs can be traced back to \cite{GyarfasSimonyi} and beyond, the study of Gallai-Ramsey numbers in their current form comes from \cite{FGJM10}, where the authors used Theorem~\ref{Thm:G-Part} to produce sharp Gallai-Ramsey numbers for several small graphs along with some general bounds for paths and cycles. The subject has also been expanded through several other publications including but not limited to \cite{FM11, GSSS10}. In particular, in \cite{GSSS10}, the following general behavior of Gallai-Ramsey numbers was established.

\begin{theorem}[\cite{GSSS10}]\label{Thm:Dichotomy}
Let $H$ be a fixed graph with no isolated vertices. If $H$ is bipartite and not a star, then $gr_{k}(K_{3} : H)$ is linear in $k$. If $H$ is not bipartite, then $gr_{k}(K_{3} : H)$ is exponential in $k$.
\end{theorem}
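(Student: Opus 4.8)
I would split into the bipartite and the non-bipartite case and, in each, prove an upper and a lower bound of the correct order; the two basic tools are the Gallai-partition structure (Theorem~\ref{Thm:G-Part}) and the monotonicity $gr_k(K_3:H)\le gr_k(K_3:H')$ whenever $H\subseteq H'$, together with finiteness of $R_2(H)=R(H,H)$. For the lower bounds I would use two subgraph reductions. If $H$ is bipartite, not a star, and has no isolated vertex, then --- an intersecting family of edges being a star or a triangle --- $H$ contains two independent edges, so $2K_2\subseteq H$; and if $H$ is not bipartite then $C_g\subseteq H$, where $g=2\ell+1$ is the odd girth of $H$. Thus it suffices to build, for every $k$, a rainbow-triangle-free $k$-coloring of a clique of size $\Omega(k)$ (resp.\ $2^{\Omega(k)}$) with no monochromatic $2K_2$ (resp.\ $C_{2\ell+1}$). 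For $2K_2$: color the edge $\{i,j\}$ of $K_{k+1}$ with $i<j$ by color $i$; each color class is a star and every triangle $\{i,j,l\}$ with $i<j<l$ repeats color $i$, so there is no rainbow triangle, and $gr_k(K_3:H)\ge k+2$. For $C_{2\ell+1}$: start from a monochromatic $K_{2\ell}$ and repeat ``take two disjoint copies of the current colored clique and join them completely by a fresh color''; the fresh color class is complete bipartite (no odd cycle), each old color class stays inside a single copy, and no rainbow triangle is created, so after $k-1$ steps we reach $\ell\cdot 2^k$ vertices and $gr_k(K_3:H)\ge\ell\cdot 2^k+1$.

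For the upper bound in the bipartite case, fix $t$ with $H\subseteq K_{t,t}$ and iterate the Gallai partition. In a rainbow-triangle-free coloring of $K_n$ with no monochromatic $H$, take a Gallai partition with fewest parts; its reduced graph uses two colors, say $a$ and $b$, and has at most $R_2(H)-1$ parts (otherwise the reduced graph already contains a monochromatic $H$). Two parts of size $\ge t$ would be joined monochromatically, giving a monochromatic $K_{t,t}\supseteq H$; hence at most one part is large, so the largest part $V_1$ has $|V_1|\ge n-C$ with $C=(R_2(H)-2)(t-1)$. Every vertex outside $V_1$ is joined to all of $V_1$ in color $a$ or in color $b$, so $t$ outside vertices sharing such a color would again give a monochromatic $K_{t,t}$; thus inside $V_1$ we recurse on a Gallai-colored clique that has lost at most $C$ vertices and in which some color $c$ must now avoid not only $K_{t,t}$ but even $K_{t,t-1}$ (a color-$c$ copy of $K_{t,t-1}$ in $V_1$ together with an outside vertex $c$-joined to $V_1$ would complete a color-$c$ copy of $K_{t,t}$). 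Iterating yields a nested chain $W_0\supseteq W_1\supseteq\cdots$ with $|W_i|\ge n-iC$; recording, for each color, the number of already-removed vertices joined to the current part in that color, one sees that some $W_i$ has fewer than $t$ vertices within $tk$ steps --- otherwise some color accumulates $t$ such vertices while the current part still has $\ge t$ vertices, forcing a monochromatic $K_{t,t}$. Then $n\le |W_i|+iC< t+tkC$, so $gr_k(K_3:H)=O(k)$.

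For the upper bound in the non-bipartite case it suffices, by monotonicity, to show $gr_k(K_3:K_h)\le D^{k}$ for $h=|V(H)|$ and a constant $D=D(h)$. I would induct on $k$ while carrying, for each color $i$, a forbidden clique size $s_i\le h$ --- so the quantity to bound is the maximum $n$ admitting a rainbow-triangle-free $k$-coloring of $K_n$ in which color $i$ has no $K_{s_i}$, started at $s_1=\cdots=s_k=h$. Given a Gallai partition with fewest parts (reduced colors $a,b$, hence at most $R_2(K_h)-1$ parts), the largest part $V_1$ has $|V_1|\ge n/(R_2(K_h)-1)$, and if $V_1$ is joined to another part in color $a$ then the color-$a$ subgraph of $V_1$ has no $K_{s_a-1}$, since one further vertex of that part would complete a monochromatic $K_{s_a}$; likewise for $b$. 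So passing to $V_1$ decreases one or two of the $s_i$ by $1$ at the cost of a factor $R_2(K_h)-1$, and a color whose parameter reaches $1$ is dropped; since $\sum_i(s_i-1)$ starts at $k(h-1)$ and strictly decreases at each partition step, there are at most $k(h-1)$ such steps, and the one-color base case contributes at most $h-1$. This gives $gr_k(K_3:K_h)\le (h-1)\bigl(R_2(K_h)-1\bigr)^{k(h-1)}+1=2^{O(k)}$, hence $gr_k(K_3:H)=2^{O(k)}$.

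The step I expect to be the main obstacle is the exponential upper bound. In the triangle case a single monochromatic edge joining two parts immediately forbids that color inside the larger part, so colors are shed one per level; for a general --- or even complete --- target one only learns that a \emph{slightly smaller} monochromatic clique is forbidden there, so the color-reduction must be run with several auxiliary parameters tracking how far each color's forbidden clique size has dropped, and making this multi-parameter induction close is the technical heart. The termination count in the bipartite upper bound is delicate for the same reason, though the biclique structure keeps the bookkeeping cleaner; the reductions and the two constructions are comparatively routine.
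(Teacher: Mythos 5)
This theorem is not proved in the paper at all: it is quoted from \cite{GSSS10}, so there is no internal argument to compare yours against. Judged on its own, your sketch is essentially correct and follows the standard route (the same one used in \cite{GSSS10}): reduce the lower bounds to $2K_2$ and to the shortest odd cycle via the observations that a bipartite non-star with no isolated vertex contains two independent edges and that a non-bipartite graph contains $C_{2\ell+1}$; build the linear construction from stars and the exponential one by repeated doubling with a fresh complete-bipartite color; and obtain both upper bounds by iterating the Gallai partition while tracking, per color, either the removed vertices monochromatically joined to the surviving part (bipartite case) or a shrinking forbidden clique size (non-bipartite case, after passing to $K_{|V(H)|}$). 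The potential-function terminations both close: in the bipartite case each step adds at least one removed vertex to some color's tally and $t$ in one color forces $K_{t,t}$, and in the clique case $\sum_i(s_i-1)$ strictly decreases per partition step. One small slip: a color should be discarded when its parameter $s_i$ reaches $2$ (no monochromatic edge), not $1$; this does not affect the $2^{O(k)}$ bound. Everything else is at the level of detail appropriate for a sketch and I see no gap.
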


There is a survey containing these and related results in \cite{FMO10} with a dynamically updated version available in \cite{FMO14}.

More specifically (and more recently), Fox, Grinshpun, and Pach proposed the general study of Gallai-Ramsey numbers for complete graphs with the following conjecture.

\begin{conjecture}[\cite{FGP15}]\label{Conj:Fox}
For $k\ge 1$ and $p \geq 3$,
$$
gr_{k}(K_{3} : K_{p}) = \begin{cases}
(R(K_{p}, K_{p}) - 1)^{k/2} + 1 & \text{ if $k$ is even,}\\
(p - 1)(R(K_{p}, K_{p}) - 1)^{(k - 1)/2} + 1 & \text{ if $k$ is odd.}
\end{cases}
$$
\end{conjecture}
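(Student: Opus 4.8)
The argument naturally splits into a lower bound, which I would establish by an explicit recursive construction, and an upper bound, which I would attack by induction on $k$ using the Gallai partition structure of Theorem~\ref{Thm:G-Part}. Throughout write $r = R(K_p,K_p)$.

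For the lower bound, take as base cases a single-color $K_{p-1}$ for $k=1$ and a $2$-coloring of $K_{r-1}$ with no monochromatic $K_p$ (which exists by the definition of $r$) for $k=2$; both realize the conjectured value, since with at most two colors no rainbow triangle can occur and $gr_{2}(K_3:K_p)=r$. The inductive step is a substitution: given a Gallai $(k-2)$-coloring $G$ of $K_m$ on $m = gr_{k-2}(K_3:K_p)-1$ vertices with no monochromatic $K_p$, replace every vertex of a $2$-colored $K_{r-1}$ with no monochromatic $K_p$ by a disjoint copy of $G$, coloring inside each copy as in $G$ and all edges between copies by the two colors of $K_{r-1}$, chosen distinct from the colors of $G$. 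One checks there is no rainbow triangle (a triangle lies within one copy, or meets exactly two copies and then has two "between'' edges of a common color, or meets three copies and uses only the two new colors) and no monochromatic $K_p$ (a clique in an old color lies in one copy, while a clique in a new color has its vertices in distinct copies and projects onto a monochromatic $K_p$ of $K_{r-1}$). This gives $gr_k(K_3:K_p) \ge (gr_{k-2}(K_3:K_p)-1)(r-1)+1$, and iterating from the two base cases produces exactly the claimed formula.

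For the upper bound I would induct on $k$, the cases $k\le 2$ being the classical $2$-color Ramsey bound. Suppose $n$ equals the conjectured value and $K_n$ carries a Gallai $k$-coloring with no monochromatic $K_p$. Apply Theorem~\ref{Thm:G-Part} to obtain a partition into parts $V_1,\dots,V_t$ with $t\ge 2$ minimal, whose reduced graph is a $2$-colored $K_t$ in colors red and blue. If $t\ge r$ the reduced graph contains a monochromatic $K_p$, which lifts (one vertex per part) to a monochromatic $K_p$ of $K_n$; hence $t\le r-1$. The heart of the argument is to bound the part sizes: taking $V_1$ largest, every other part is joined to $V_1$ entirely in red or entirely in blue, which splits $V_2\cup\cdots\cup V_t$ into a red block $A$ and a blue block $B$. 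A single vertex of $V_1$ then forces $A$ to have no red $K_{p-1}$ and $B$ to have no blue $K_{p-1}$, and since $A\cup B\neq\emptyset$ the part $V_1$ itself misses a monochromatic $K_{p-1}$ in at least one of red, blue. Each of $A$, $B$, $V_1$ again carries a Gallai $k$-coloring, so feeding these clique-free conditions and the inductive hypothesis back in should bound $|A|+|B|+|V_1|=n$ strictly below the conjectured value, the desired contradiction.

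The step I expect to be the real obstacle is exactly this final accounting. A clean induction wants a hypothesis controlling not merely ``no monochromatic $K_p$ in any color'' but ``no monochromatic $K_p$ in any color and no monochromatic $K_{p-1}$ in one prescribed color,'' and one must verify that the resulting recursion telescopes to the exact conjectured constant rather than merely to an exponential with the right base; the substitution shows the recursion is tight, but closing it from above for general $p$ does not obviously work, which is why the statement is still only a conjecture. For $p=3$ it recovers the known value of $gr_k(K_3:K_3)$ from \cite{FGJM10}, and for $p=4,5$ the accounting can be completed by a finite case analysis tied to the (known or estimated) values of $R(K_4,K_4)$ and $R(K_5,K_5)$. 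The same three ingredients — the $2$-colored reduced graph, the red/blue block decomposition, and a substitution lower bound — are what I would carry over to books, with $B_m$ playing the role of $K_p$: this should yield sharp values of $gr_k(K_3:B_m)$ for $m\le 5$ and general upper and lower bounds for all $m$.
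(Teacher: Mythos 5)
The statement you have been given is Conjecture~\ref{Conj:Fox}, and the paper does not prove it: it is an open conjecture of Fox, Grinshpun, and Pach, quoted only as motivation. The paper records that exactly two cases are known --- $p=3$ (Chung--Graham, Gy\'arf\'as et al.) and $p=4$ (Liu, Magnant, Saito, Schiermeyer, and Shi) --- each by a substantial separate argument, and it explicitly remarks that the case $p=5$ cannot even be stated sharply until $R(K_5,K_5)$ is determined. So there is no proof in the paper to compare against, and your proposal cannot be accepted as a proof of the statement.

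Judged on its own terms, your write-up contains one complete half and one missing half. The lower bound is genuinely correct: substituting a Gallai $(k-2)$-coloring into each vertex of an extremal $2$-colored $K_{R(K_p,K_p)-1}$ is the standard construction (the paper's Theorem~\ref{Thm:GR-Bm-LowBd} is the same idea adapted to books), your case check for rainbow triangles and monochromatic cliques is sound, and the recursion $gr_k \geq (gr_{k-2}-1)(R(K_p,K_p)-1)+1$ does telescope from the bases $k=1,2$ to the conjectured value. The upper bound, however, is only a plan, and you concede yourself that the ``final accounting'' does not close. That accounting is precisely the entire open content of the conjecture: after the reduction to a Gallai partition with reduced graph on at most $R(K_p,K_p)-1$ parts, the red block, the blue block, and the largest part each avoid a monochromatic $K_{p-1}$ in one prescribed color, so the induction requires sharp control of a mixed quantity of the form ``no rainbow triangle, no monochromatic $K_p$ in any color, and no monochromatic $K_{p-1}$ in one designated color,'' and nobody knows how to bound this for general $p$; even for $p=4$ the known proof is a long ad hoc analysis rather than an instance of a clean recursion. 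Since the step you flag as ``the real obstacle'' is the whole difficulty, what you have is a correct lower-bound argument together with an outline of a program, not a proof.
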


The case where $p = 3$ was actually verified in 1983 by Chung and Graham \cite{CG83} and then again in \cite{AI08} in a different context, long before the conjecture. A simplified proof was given by Gy\'arf\'as et al.~\cite{GSSS10}.

\begin{theorem}[\cite{AI08, CG83,GSSS10}]
For $k \geq 1$,
$$
gr_{k}(K_{3} : K_{3}) = \begin{cases}
5^{k/2} + 1 & \text{if $k$ is even,}\\
2\cdot 5^{(k-1)/2} + 1 & \text{if $k$ is odd.}
\end{cases}
$$
\end{theorem}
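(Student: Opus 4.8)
The plan is to prove matching lower and upper bounds, in each case by induction on $k$ with the step lowering $k$ by two and multiplying the vertex count by five — the ``five'' coming from $R(K_3,K_3)=6$, so that $K_5$ admits a $2$-coloring with no monochromatic triangle.

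\textbf{Lower bound.} I would start from the base colorings: $K_2$ in a single color (no monochromatic $K_3$, so $gr_1(K_3:K_3)\ge 3$) and the $2$-coloring of $K_5$ whose two color classes are complementary $5$-cycles (no monochromatic $K_3$ and, having only two colors, no rainbow triangle, so $gr_2(K_3:K_3)\ge 6$). For the step, given a $(k-2)$-coloring of $K_{n'}$ with $n'=gr_{k-2}(K_3:K_3)-1$ having neither a rainbow triangle nor a monochromatic $K_3$, I would blow up each vertex of the $2$-colored $K_5$ (using two fresh colors) into a copy of this coloring, coloring edges inside a copy by the small coloring and edges between two copies by the color of the corresponding edge of $K_5$. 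A short check shows the result has no monochromatic $K_3$ (in an old color such a triangle lies in one copy; in a new color it projects to a monochromatic triangle of $K_5$) and no rainbow triangle (a triangle lies in one copy, or has two vertices in one copy joined to the third by two equal colors, or projects into the $2$-colored $K_5$). Hence $gr_k(K_3:K_3)\ge 5(gr_{k-2}(K_3:K_3)-1)+1$, and iterating from the two base cases yields exactly $5^{k/2}+1$ for even $k$ and $2\cdot 5^{(k-1)/2}+1$ for odd $k$.

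\textbf{Upper bound.} Induct on $k$, with base cases $gr_1(K_3:K_3)=3$ and $gr_2(K_3:K_3)=R(K_3,K_3)=6$. For $k\ge 3$ let $n$ denote the claimed value and suppose some $k$-coloring of $K_n$ has neither a rainbow triangle nor a monochromatic $K_3$. Apply Theorem~\ref{Thm:G-Part} to obtain a Gallai partition into $t\ge 2$ parts with at most two colors, say red and blue, between the parts, and let $H$ be the reduced ($2$-colored complete) graph on these parts. A monochromatic triangle in $H$ lifts to a monochromatic $K_3$ in $K_n$; since $R(K_3,K_3)=6$ this handles $t\ge 6$, so we may assume $2\le t\le 5$ and $H$ has no monochromatic triangle (in particular both colors occur on $H$ when $t\ge 3$). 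If $t\in\{4,5\}$, then every vertex of $H$ meets both colors — otherwise a part joined to all others in, say, red would force the remaining $t-1\ge 3$ parts to be pairwise blue, a blue triangle — so a largest part $V_1$, with $|V_1|\ge\lceil n/5\rceil$, has no red and no blue edge inside it; its induced coloring uses at most $k-2$ colors and has no rainbow triangle, and since $\lceil n/5\rceil\ge gr_{k-2}(K_3:K_3)$ the induction hypothesis gives a monochromatic $K_3$ there. If instead $t\in\{2,3\}$, then a single color, say red, appears between one part and the union of the rest (when $t=3$, two of the three edges of $H$ share a color, and the part incident to both of them plays this role); writing $V(K_n)=W\cup W'$ with all edges between $W$ and $W'$ red, neither $W$ nor $W'$ contains a red edge, so each induces a coloring with at most $k-1$ colors and no rainbow triangle, and the larger, with at least $\lceil n/2\rceil\ge gr_{k-1}(K_3:K_3)$ vertices, contains a monochromatic $K_3$ by induction. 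In every case we reach a contradiction, so $n$ is an upper bound.

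\textbf{Main obstacle.} The delicate part is making the induction close, i.e. verifying $\lceil n/5\rceil\ge gr_{k-2}(K_3:K_3)$ and $\lceil n/2\rceil\ge gr_{k-1}(K_3:K_3)$ for the claimed $n$; the first holds with equality, and the second is tight precisely when $k$ is odd, which is exactly where the $t\le 3$ reduction meets the odd-$k$ construction. The genuinely structural input beyond Theorem~\ref{Thm:G-Part} is noticing that $t=3$ cannot in general be reduced by two colors (unlike $t=4,5$), so one must regroup the three parts into two blocks sharing a common color and lose only one color; recognizing this asymmetry between small and large $t$ is the crux.
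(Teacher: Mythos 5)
Your proof is correct, and it follows essentially the same Gallai-partition strategy that this paper uses for its own book results (the quintupling construction of Theorem~\ref{Thm:GR-Bm-LowBd} for the lower bound, and the case analysis on the number of parts $t$ of a G-partition, with $R(K_3,K_3)=6$ bounding $t\le 5$, for the upper bound); note that the paper itself does not prove this statement but cites it from Chung--Graham, Axenovich--Iverson, and Gy\'arf\'as et al., whose simplified proof your argument essentially reproduces. Your closing checks $\lceil n/5\rceil\ge gr_{k-2}$ and $\lceil n/2\rceil\ge gr_{k-1}$, and the regrouping of the $t=3$ case into a bipartition losing only one color, are exactly the right points of tightness.
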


The next open case, when the desired monochromatic graph is $K_{4}$, was recently verified.

\begin{theorem}[\cite{GRK4}]
For $k\ge 1$,
$$
gr_{k}(K_{3} : K_{4}) = \begin{cases} 17^{k/2} + 1 & \text{ if } k \text{ is even,}\\
3\cdot 17^{(k - 1)/2} + 1 & \text{ if } k \text{ is odd.}
\end{cases}
$$
\end{theorem}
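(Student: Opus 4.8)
The plan is to prove matching bounds, the lower one by an explicit iterated construction and the upper one by induction on $k$ via Theorem~\ref{Thm:G-Part}. Write $f(k)$ for the claimed value, so that $f(1) = 4 = R(K_{4})$ and $f(2) = 18 = R(K_{4},K_{4})$ (the classical Greenwood--Gleason value), and $f(k) - 1 = 17\bigl(f(k-2) - 1\bigr)$ for $k \ge 3$; these are exactly the facts an induction in steps of two needs, once one notices that $17 = R(K_{4},K_{4}) - 1$.

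\textbf{Lower bound.} I would iterate the substitution (blow-up) operation on the critical $2$-colouring of $K_{17}$ with no monochromatic $K_{4}$, namely the Paley colouring of order $17$ --- unique up to isomorphism, vertex-transitive, with every vertex incident to both colours and lying in both a red and a blue triangle. Beginning from a single vertex when $k$ is even, or from a triangle painted in one fresh colour when $k$ is odd, at each of the $\flr{k/2}$ steps I replace every vertex of a new copy of the $K_{17}$-colouring by the current graph, using two brand-new colours for the new $K_{17}$-layer. No rainbow triangle is created (a triangle meeting three blocks uses only the two newest colours; one meeting two blocks repeats the colour joining them) and no monochromatic $K_{4}$ is created (a $K_{4}$ in an old colour lies inside a single block, while one in a new colour projects to a monochromatic $K_{4}$ of $K_{17}$), so this yields a Gallai $k$-colouring of $K_{f(k) - 1}$ with no monochromatic $K_{4}$.

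\textbf{Upper bound.} Suppose $K_{f(k)}$ is $k$-coloured with no rainbow triangle, and fix a Gallai partition $V = H_{1} \cup \dots \cup H_{r}$ with $r \ge 2$ as small as possible (Theorem~\ref{Thm:G-Part}); let $\mathcal{R}$ be the reduced graph, a $2$-colouring of $K_{r}$ in red and blue. If $\mathcal{R}$ has a monochromatic $K_{4}$ we pick one vertex from each of its four parts and are done, so $\mathcal{R}$ is $K_{4}$-free and $r \le 17$; moreover minimality of $r$ rules out merging two parts with equal colour profiles, which forces $\mathcal{R}$ to use both colours once $r \ge 3$. The local observation driving everything is: if parts $A, B, C$ form a red triangle in $\mathcal{R}$ and one of them contains an internal red edge $uv$, then $\{u, v, b, c\}$ with $b \in B$, $c \in C$ is a red $K_{4}$; hence a part lying in a monochromatic triangle of $\mathcal{R}$ of colour $i$ carries no colour-$i$ edge internally, so it uses at most $k - 2$ colours and, by induction, has at most $f(k-2) - 1$ vertices. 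If $r = 17$ then $\mathcal{R}$ is the Paley colouring, every part lies in a red and in a blue triangle, and $f(k) = \sum_{i} |H_{i}| \le 17\bigl(f(k-2) - 1\bigr) = f(k) - 1$, a contradiction; for $6 \le r \le 16$ one argues the same way, invoking $R(K_{3},K_{3}) = 6$ to locate monochromatic triangles in $\mathcal{R}$ and checking that the few parts lying in no monochromatic triangle of a given colour are still too few to break the bound.

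\textbf{Main obstacle.} The hard part is the regime $2 \le r \le 5$, together with the uniform book-keeping. There a large part may be forced to omit only one colour, hence only drop to $k - 1$ colours; since $\bigl(f(k-1) - 1\bigr)/\bigl(f(k-2) - 1\bigr)$ equals $3$ when $k$ is even but $17/3$ when $k$ is odd, these ``omit one colour'' branches are tight and may be multiplied by at most three parts. I would therefore dispatch the finitely many $K_{4}$-free $2$-colourings of $K_{r}$ with $r \le 5$ explicitly: when $\mathcal{R}$ is monochromatic, minimality forces $r = 2$; and in general, using two extra blocks, one shows strong restrictions on which parts may carry an internal edge of a colour present in $\mathcal{R}$ at that part, and that any part which does carry an internal colour-$i$ edge contains no colour-$i$ triangle (else a vertex of an adjacent part completes a monochromatic $K_{4}$). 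This last restriction must be converted --- through a secondary induction on a mixed Ramsey-type quantity that forbids a colour-$i$ triangle as well as copies of $K_{4}$ in the remaining colours, with base cases drawn from $R(K_{3},K_{4}) = 9$ and $R(K_{4},K_{4}) = 18$ --- into a bound good enough that $\sum_{i} |H_{i}| \le f(k) - 1$ in every branch. Making this parity-flipping step close exactly, and verifying the arithmetic uniformly across all cases, is where the real work lies.
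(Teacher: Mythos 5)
This statement is quoted from the separate manuscript \cite{GRK4} and is not proved anywhere in the present paper, so there is no in-paper proof to compare against; judged on its own terms, your proposal has a genuine gap. The lower bound is fine: iterating the Paley $K_{17}$ substitution (seeded with a point or a fresh-coloured triangle) is exactly the standard construction and your verification that no rainbow triangle and no monochromatic $K_{4}$ arises is correct. The problem is the upper bound, where everything you write after locating the Gallai partition is a plan rather than an argument. You yourself flag the regime $2 \le r \le 5$ as ``where the real work lies'' and propose, but do not carry out, a secondary induction on a mixed quantity forbidding a monochromatic triangle in one colour and $K_{4}$ in the others; that auxiliary induction (its precise statement, its base cases, and the check that it closes against the parity-dependent ratio $\bigl(f(k-1)-1\bigr)/\bigl(f(k-2)-1\bigr) \in \{3, 17/3\}$) is the entire content of the theorem, and it is absent.

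Moreover, the part you claim is routine, $6 \le r \le 16$, is not: the bound you describe does not close as stated. Take $r = 6$ with reduced graph $\mathcal{R}$ having red $= K_{3,3}$ and blue $= 2K_{3}$; this is $K_{4}$-free, every part lies in a blue triangle of $\mathcal{R}$ but \emph{no} part lies in a red triangle, so your triangle observation only strips blue from each part, giving $|H_{i}| \le f(k-1) - 1$ and the useless total $6\bigl(f(k-1)-1\bigr) = 18\cdot 17^{(k-2)/2} > f(k) - 1$ for $k$ even. Rescuing this requires the additional pairwise observation (two parts joined in red in $\mathcal{R}$ cannot both contain an internal red edge, else four vertices form a red $K_{4}$), which cuts three of the six parts down to $f(k-2)-1$ and restores the count; your sketch states the triangle version of this interaction but never the edge version, and never performs the case-by-case accounting over the $K_{4}$-free $2$-colourings of $K_{r}$ that the argument demands. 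So the proposal correctly identifies the architecture of a proof but does not supply one.
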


Note that these numbers depend heavily upon the $2$-color Ramsey number so the next case of this conjecture will not yield a sharp result until $R(K_{5}, K_{5})$ is discovered. This makes the study of Gallai-Ramsey numbers for non-complete graphs a more natural and interesting endeavor.

The \emph{book} graph with $m$ \emph{pages} is denoted by $B_m$, where $B_{m} = K_{2} + \overline{K_{m}}$. See Figure~\ref{Fig:B4} for a drawing of $B_{4}$. Call the central edge (the $K_{2}$) the \emph{spine} of the book. Note that $B_{1} = K_{3}$ and $B_{2} = K_{4} \setminus \{e\}$ where $e$ is an edge of the $K_{4}$. In this work, we prove bounds on the Gallai-Ramsey number of all books, with sharp results for several small books. Since books are not bipartite, in light of Theorem~\ref{Thm:Dichotomy}, it should come as no surprise that all of our results are exponential as a function of the number of colors $k$.

\begin{figure}[H]
\begin{center}
\includegraphics{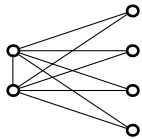} 
\end{center}
\caption{The book graph $B_{4}$ \label{Fig:B4}}
\end{figure}

The outline of the paper is as follows. In the short Section~\ref{Sec:Prelim}, we recall some known results that will be used later. Section~\ref{Sec:Low} contains a general lower bound while Section~\ref{Sec:Up} contains a general upper bound. We provide the sharp Gallai-Ramsey number for several small books in Section~\ref{Sec:Small} and finally, in Section~\ref{Sec:Con}, we state a conjecture generalizing the sharp results of Section~\ref{Sec:Small}.

\section{Preliminaries}\label{Sec:Prelim}

In this section, we briefly recall some tools that will be helpful in the remainder of this work. The first result was actually one of the first results on monochromatic subgraphs of rainbow triangle free colorings.

\begin{theorem}[\cite{GyarfasSimonyi}]\label{Thm:Stars}
In any G-coloring of a complete graph, there is a vertex with at least $\frac{2n}{5}$ incident edges in a single color.
\end{theorem}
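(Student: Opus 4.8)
The plan is to apply the Gallai partition structure of Theorem~\ref{Thm:G-Part} directly, with no induction. Fix a G-coloring of $K_n$ and take a Gallai partition $V_1,\dots,V_t$ with $t\ge 2$, writing $n_i=|V_i|$ and letting the reduced graph use colors from $\{\text{red},\text{blue}\}$. For each part $i$, set $r_i=\sum_{j\,:\,ij\text{ red in the reduced graph}} n_j$ and $b_i=\sum_{j\,:\,ij\text{ blue}} n_j$, so that $r_i+b_i=n-n_i$. The key (trivial) observation is that every vertex of $V_i$ is joined in red to all of $\bigcup_{ij\text{ red}}V_j$ and in blue to all of $\bigcup_{ij\text{ blue}}V_j$, so it has red-degree at least $r_i$ and blue-degree at least $b_i$. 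I will argue by contradiction: suppose no vertex of $K_n$ has $\tfrac{2n}{5}$ incident edges in a single color; in particular $r_i<\tfrac{2n}{5}$ and $b_i<\tfrac{2n}{5}$ for every $i$.

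First I dispose of the degenerate case in which the reduced graph uses only one color, say red (this includes $t=2$, since then the reduced graph has a single edge). Choosing $i$ with $n_i$ minimum gives $n_i\le n/t\le n/2$, hence $r_i=n-n_i\ge n/2\ge \tfrac{2n}{5}$, contradicting the assumption. So the reduced graph uses exactly two colors, which forces $t\ge 3$.

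Since $r_i+b_i=n-n_i<\tfrac{4n}{5}$ for every $i$, we get $n_i>n/5$ for all $i$, and therefore $t\le 4$, i.e.\ $t\in\{3,4\}$. If $t=3$, the two-colored triangle formed by the reduced graph has some vertex, say $1$, incident to two edges of the same color; then the corresponding $r_1$ (or $b_1$) is the sum of the two other part-sizes, which exceeds $\tfrac{2n}{5}$ because each of those parts has more than $n/5$ vertices --- contradiction. If $t=4$, note that no vertex of the reduced $K_4$ can be incident to two red edges, since the two corresponding parts would again contribute more than $\tfrac{2n}{5}$ to $r_i$; hence the red reduced graph has maximum degree at most $1$, and likewise for blue. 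But then the red and blue reduced graphs are matchings on $4$ vertices, together containing at most $4<6=\binom{4}{2}$ edges, which is impossible. This contradiction completes the argument.

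The only real content lies in the middle step: recognizing that violating the $\tfrac{2n}{5}$ bound forces every part of the Gallai partition to be comparatively large, which in turn caps the number of parts at four; after that, the cases $t\in\{2,3,4\}$ reduce to a short finite check. It is worth noting that the constant $2/5$ is sharp --- the standard two-coloring of $K_5$ by two disjoint $5$-cycles, suitably blown up, keeps every vertex's largest monochromatic degree at exactly $\tfrac{2n}{5}$ --- so any proof must lose nothing in these estimates, which is precisely why the part-size bound $n_i>n/5$ has to be pushed to its limit.
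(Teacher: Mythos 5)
Your argument is correct. The paper itself imports this statement from \cite{GyarfasSimonyi} without proof, so there is nothing internal to compare against; your derivation --- take a Gallai partition, observe that every vertex of $V_i$ has monochromatic degree at least $r_i$ and at least $b_i$, deduce from $r_i+b_i=n-n_i<\tfrac{4n}{5}$ that every part exceeds $n/5$ so that $t\le 4$, and then finish by a short check of $t\in\{2,3,4\}$ --- is a clean, self-contained proof in the spirit of the original. The only (harmless) caveat is the degenerate case $n=1$, where Theorem~\ref{Thm:G-Part} gives no nontrivial partition and the statement fails vacuously; for $n\ge 2$ everything you wrote goes through, and your closing remark about sharpness via the iterated blow-up of the two-$C_5$ coloring of $K_5$ is accurate.
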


We also recall some known $2$-color Ramsey numbers of small books.

\begin{theorem}[\cite{CH72, RS78}]\label{Thm:RamseyBm}
\beqs
R(B_{2}, B_{2}) & = & 10,\\
R(B_{3}, B_{3}) & = & 14,\\
R(B_{4}, B_{4}) & = & 18, \text{ and}\\
R(B_{5}, B_{5}) & = & 21.
\eeqs
\end{theorem}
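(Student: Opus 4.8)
The four values are classical $2$‑colour Ramsey numbers, so my plan is to reconstruct, for each, a matching lower‑bound colouring and then the extremal upper‑bound argument. Throughout I would use the elementary observation that a graph $H$ contains $B_m = K_2 + \overline{K_m}$ if and only if some edge of $H$ lies in at least $m$ triangles of $H$, equivalently some two adjacent vertices of $H$ have at least $m$ common neighbours.

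For the lower bounds the goal is, for each listed value $N = R(B_m,B_m)$, to exhibit a $2$‑colouring of $K_{N-1}$ with no monochromatic $B_m$, i.e.\ a graph $G$ on $N-1$ vertices in which every edge of $G$ and every edge of $\overline{G}$ lies in at most $m-1$ triangles of its own graph. For $m \in \{2,3,4\}$ we have $N-1 = 4m+1 \in \{9,13,17\}$, each a prime power, so the Paley graph $P_{4m+1}$ is available; it is a self‑complementary strongly regular graph with parameters $(4m+1,\,2m,\,m-1,\,m)$, so every edge lies in exactly $m-1$ triangles of $P_{4m+1}$ and of its complement. Colouring the edges of $P_{4m+1}$ red and the rest blue therefore avoids a monochromatic $B_m$, giving $R(B_m,B_m) \ge 4m+2$ for $m=2,3,4$. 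For $m=5$ we have $N-1 = 20$ while $4m+1 = 21$ is not a prime power, so no Paley construction exists (indeed $R(B_5,B_5) = 21 < 22$); here one must supply an explicit $20$‑vertex graph — as done by Rousseau and Sheehan \cite{RS78} — in which each edge and each non‑edge lies in at most $4$ triangles of the relevant graph.

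For the upper bounds the plan is to show that every $2$‑colouring of $K_N$ with $N \in \{10,14,18,21\}$ contains a monochromatic $B_m$. The skeleton: pick a vertex $v$ whose majority colour (say red) has degree $d \ge \lceil (N-1)/2 \rceil$, and study the $2$‑colouring induced on the red neighbourhood $A$ of $v$. If some red edge of $K[A]$ lies in at least $m-1$ red triangles inside $A$, then together with $v$ it lies in $m$ red triangles, a red $B_m$; and a blue edge of $K[A]$ in at least $m$ blue triangles of $A$ is a blue $B_m$ outright. So we may assume the red graph on $A$ is $B_{m-1}$‑free and the blue graph on $A$ is $B_m$‑free, i.e.\ each red edge of $K[A]$ is in at most $m-2$ red triangles and each blue edge in at most $m-1$ blue triangles. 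One then counts: $K[A]$ has $\binom{|A|}{3}$ triangles, while these bounds cap the number of monochromatic triangles in terms of the red and blue edge counts on $A$; combined with Goodman‑type lower bounds on the number of monochromatic triangles, this squeezes the red/blue edge counts on $A$ into a narrow band and, pushed further, forces the red graph on $A$ to be close to the balanced complete bipartite graph — whose complement then contains two adjacent vertices with at least $m$ common blue neighbours, a contradiction as soon as $|A|$ is as large as these values provide.

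The hard part is exactly this endgame: ruling out the near‑extremal colourings, in which both colour classes sit simultaneously near the threshold and neither the crude edge count nor the neighbourhood trick alone suffices. For the diagonal books this is where the known proofs either invoke the general density fact that an $n$‑vertex graph with more than $n^2/4$ edges contains a book of order about $n/6$ (to clear all but a thin slice of colourings) and then dispatch the remaining structured colourings by a delicate case analysis, or simply carry out a finite computer search over the candidate extremal graphs. I would follow the former route where possible — isolating the case "$G$ within $O(m)$ edges of $K_{2m+1,2m+1}$'' and arguing directly there that $\overline{G}$ contains the required blue book — and fall back on verification by computer for the genuinely tight instances, most notably $m=5$ on $21$ vertices, where the guaranteed neighbourhood size is only $2m$ rather than $2m+1$ and the balanced $4m+2$‑vertex construction fails.
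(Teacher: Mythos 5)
The paper does not actually prove this statement: it is imported from Chv\'atal--Harary and Rousseau--Sheehan \cite{CH72, RS78} as a known result, so the only question is whether your reconstruction would stand on its own. It does not, for two concrete reasons. First, your upper bound is a plan rather than a proof: after the (correct) reduction to the red neighbourhood $A$ of a majority-colour vertex, you concede that the counting merely ``squeezes'' the edge densities and that the near-extremal colourings must be handled by ``a delicate case analysis'' or ``verification by computer,'' neither of which you carry out; as written, none of the four upper bounds is established. Second, the lower bound for $m=5$ is deferred entirely to \cite{RS78}; without an explicit $B_5$-free $2$-colouring of $K_{20}$ the value $21$ is not proved. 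Your Paley constructions for $m=2,3,4$ are correct: $P_{4m+1}$ is self-complementary and strongly regular with parameters $(4m+1,2m,m-1,m)$, so no edge of either colour lies in $m$ triangles of its own colour.

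The deeper issue is that you apply the triangle count in the wrong place. The short proof of $R(B_m,B_m)\le 4m+2$ is global: if a $2$-colouring of $K_n$ has no monochromatic $B_m$, every edge lies in at most $m-1$ triangles of its own colour, so the number of monochromatic triangles is at most $\tfrac{1}{3}(m-1)\binom{n}{2}$; Goodman's bound gives at least $\tfrac{1}{24}n(n-1)(n-5)$ of them, and comparing yields $n\le 4m+1$ in one line. With the Paley lower bounds this settles $m=2,3,4$ completely, with no endgame and no computer. For $m=5$ the same inequality at $n=21$ holds with equality throughout, which forces every vertex to have degree $10$ in each colour and every edge to lie in exactly $4$ own-colour triangles; each colour class would then be a strongly regular graph with parameters $(21,10,4,5)$, i.e.\ a conference graph on $21$ vertices, which does not exist (for instance because $21$ is not a sum of two squares). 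That, rather than a computer search, is how $R(B_5,B_5)\le 21$ is obtained; the matching lower bound still requires the explicit $20$-vertex graph of \cite{RS78}.
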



Note that since $B_{1} = K_{3}$ and $B_{2} = K_{4} - e$, the following is known.

\begin{fact}[\cite{ChH2}]\label{Fact:B1B2} 
$$
R(B_{1}, B_{2}) = 7.
$$
\end{fact}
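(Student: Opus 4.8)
The plan is to establish the two bounds $R(B_1,B_2)\ge 7$ and $R(B_1,B_2)\le 7$ separately. Throughout, call the color in which we hunt for $B_1=K_3$ \emph{red} and the color in which we hunt for $B_2=K_4-e$ (the diamond) \emph{blue}; the blue graph is the complement of the red graph. For the lower bound it suffices to color $K_6$ with no red $K_3$ and no blue diamond, and I would simply take the edges of a spanning complete bipartite graph $K_{3,3}$ to be red and the rest (two vertex-disjoint triangles) to be blue: the red graph is bipartite so has no triangle, and every component of the blue graph has only three vertices, so it cannot contain the connected four-vertex graph $K_4-e$. Hence $R(B_1,B_2)>6$.

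For the upper bound, suppose some $2$-coloring of $K_7$ has neither a red $K_3$ nor a blue $K_4-e$. The first step is the observation that, since the red graph is triangle-free, the red neighborhood $N_R(v)$ of any vertex $v$ spans only blue edges, i.e.\ it is a blue clique; if $|N_R(v)|\ge 4$ this already gives a blue $K_4\supseteq B_2$, so every vertex has red-degree at most $3$ (equivalently, blue-degree at least $3$). I would then split into two cases depending on whether some vertex has red-degree exactly $3$.

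In the main case, let $v$ have $N_R(v)=\{a,b,c\}$, which is then a blue triangle, and let $W=\{w_1,w_2,w_3\}$ be the three remaining vertices. If some $w_i$ had two blue edges to $\{a,b,c\}$, together with the triangle $abc$ this would be a blue $K_4-e$; so each $w_i$ sends at least two red edges into $\{a,b,c\}$. Conversely each of $a,b,c$ already has a red edge to $v$ and no red edge inside the blue triangle, so by the degree bound it sends at most two red edges into $W$. Double counting the red edges between $\{a,b,c\}$ and $W$ forces exactly six of them, forming a $2$-regular bipartite graph, hence a single $6$-cycle; in particular each $w_i$ has exactly two red neighbors in $\{a,b,c\}$. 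Since $v$'s red neighbors are exactly $a,b,c$, all three edges from $v$ to $W$ are blue, so looking at $\{v\}\cup W$ we see no vertex of $W$ may have two blue neighbors inside $W$ (that would be a blue $K_4-e$ on four of these vertices). Thus $W$ carries at least two red edges which, being triangle-free on three vertices, form a path; its central vertex has two red neighbors in $W$ and two in $\{a,b,c\}$, i.e.\ red-degree at least $4$ --- contradiction. In the remaining case every red-degree is at most $2$; here a short argument using the blue-degree-at-least-$3$ bound (inside any blue neighborhood no vertex may have two blue neighbors, else a blue $K_4-e$, and Mantel's bound on triangle-free graphs kills the possibility of a blue neighborhood of size $5$ or $6$) forces every red-degree to equal $2$, so the red graph is a disjoint union of cycles on $7$ vertices and, being triangle-free, must be $C_7$; but the complement of $C_7$ contains a $K_4-e$ --- on the vertices $1,3,5,7$ of $C_7=1\,2\,3\,4\,5\,6\,7\,1$ the only adjacent pair is $\{1,7\}$. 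This contradiction completes the upper bound.

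I expect the red-degree-$3$ case to be the only real obstacle: one must carefully extract the forced $6$-cycle from the double counting and then combine it with the blue-$K_4-e$ restriction inside $W$ to manufacture a vertex of red-degree $4$. The lower bound and the low-degree case are routine, relying only on an explicit small coloring and on Mantel's bound together with a one-line computation in the complement of $C_7$.
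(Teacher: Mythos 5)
The paper does not prove this statement at all: it is quoted as a known value with a citation to Chv\'atal and Harary, so there is no in-paper argument to compare against. Your proposal supplies a complete elementary verification, and it checks out. The lower bound (red $K_{3,3}$, blue $2K_3$) is correct since $B_2=K_4-e$ is connected on four vertices. For the upper bound, the opening observation that every red neighborhood is a blue clique correctly caps the red degree at $3$; in the degree-$3$ case the double count between $\{a,b,c\}$ and $W$ is tight on both sides (each $w_i$ sends at least two red edges up, each of $a,b,c$ sends at most two red edges down), the resulting $2$-regular bipartite graph is forced to be a $6$-cycle, and the path of red edges inside $W$ (forced because two blue edges of $W$ would combine with the three blue edges from $v$ to give a blue $B_2$) produces a vertex of red degree $4$, a genuine contradiction. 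In the remaining case, the blue graph inside any blue neighborhood has maximum degree at most $1$, so a blue neighborhood of size $5$ or $6$ carries at least $8$ (resp.\ $12$) red edges, beating Mantel's bound and forcing a red triangle; hence the red graph is $2$-regular and triangle-free on $7$ vertices, i.e.\ $C_7$, whose complement visibly contains $K_4-e$ on $\{1,3,5,7\}$. The only blemish is the stray phrase ``blue-degree-at-least-$3$'' where the relevant bound in that case is blue degree at least $4$; the surrounding parenthetical uses the correct numbers, so nothing breaks. In short, where the paper defers to the literature, you give a short self-contained case analysis that would make the paper independent of the Chv\'atal--Harary citation for this value.
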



\section{General Lower Bound}\label{Sec:Low}

In this section, we prove a lower bound on the Gallai-Ramsey number for books by a straightforward inductive construction.

\begin{theorem}\label{Thm:GR-Bm-LowBd}
If $B_{m}$ is the book with $m$ pages, $B_{m} = K_{2} + \overline{K_{m}}$, then for $k \geq 2$,
$$
gr_{k}(K_{3} : B_{m}) \geq \begin{cases}
(R(B_{m}, B_{m}) - 1) \cdot 5^{(k - 2)/2} + 1 & \text{if $k$ is even,}\\
2 \cdot (R(B_{m}, B_{m}) - 1) \cdot 5^{(k - 3)/2} + 1 & \text{if $k$ is odd.}
\end{cases}
$$
\end{theorem}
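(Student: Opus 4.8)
The plan is to build, for each $k \ge 2$, a $k$-coloring of a complete graph on the claimed number of vertices that contains neither a rainbow triangle nor a monochromatic $B_m$; the stated bound then follows since the Gallai-Ramsey number must exceed this order. The construction proceeds by induction on $k$, adding two colors at a time so that the multiplicative factor $5$ appears once per pair of new colors, exactly as in the classical lower bound for $gr_k(K_3 : K_3)$.

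For the base cases I would handle $k=2$ and $k=3$ directly. For $k=2$: by definition of the $2$-color Ramsey number there is a red-blue coloring of $K_{R(B_m,B_m)-1}$ with no monochromatic $B_m$, and with only two colors there is trivially no rainbow triangle; this gives $gr_2(K_3:B_m) \ge R(B_m,B_m)-1+1$. For $k=3$: take the extremal $2$-coloring of $K_{R(B_m,B_m)-1}$ just described, make two disjoint copies $G_1, G_2$ of it, and join all edges between $G_1$ and $G_2$ with a third color. Any triangle either lies inside one $G_i$ (using $\le 2$ colors) or has two vertices in one part and one in the other (two of its edges are the cross-color, so not rainbow), hence no rainbow triangle; a monochromatic $B_m$ in colors $1$ or $2$ would lie inside a single $G_i$, impossible, and in color $3$ the graph is bipartite so it contains no triangle, hence no $B_m$ (as $B_m \supseteq K_3$). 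This yields $2(R(B_m,B_m)-1)$ vertices, matching the odd formula at $k=3$.

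For the inductive step, suppose we have a valid $(k-2)$-coloring $G$ of $K_N$ with colors $\{1,\dots,k-2\}$, no rainbow triangle, and no monochromatic $B_m$. Take the extremal $2$-coloring $R$ of $K_5$ using two new colors $k-1, k$ that has no monochromatic triangle (the unique such coloring being the $5$-cycle and its complement), and "blow up" each of the five vertices of $R$ into a copy of $G$, keeping the color of $R$ on every edge between distinct blobs. The resulting coloring of $K_{5N}$ uses $k$ colors. A triangle with all three vertices in one blob inherits a coloring from $G$, so is not rainbow; a triangle meeting two or three blobs uses only the two new colors on its cross-blob edges together with at most one old color, and since $R$ has no rainbow (here, no bichromatic-with-distinct-colors) triangle among blobs, one checks it cannot be rainbow. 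A monochromatic $B_m$ in an old color lies in a single blob, contradicting the choice of $G$; a monochromatic $B_m$ in color $k-1$ or $k$ would yield a monochromatic triangle in $R$, contradiction. Iterating from the appropriate base case ($k=2$ if $k$ is even, $k=3$ if $k$ is odd) multiplies the vertex count by $5$ each time, giving exactly the two stated expressions.

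The only genuinely delicate point is the rainbow-triangle check in the blow-up step: one must verify carefully that a triangle with exactly two vertices in one blob cannot be rainbow (its two cross-blob edges receive the same color, the color of the single $R$-edge joining those two blobs) and that a triangle with one vertex in each of three distinct blobs is not rainbow (its three edges are colored by the corresponding triangle in $R$, which uses at most two colors since $R$ is $2$-colored and triangle-monochromatic-free does not matter here — what matters is $R$ uses only $2$ colors total, so any triangle in $R$ is automatically not rainbow). I would state this verification as a short self-contained lemma about blow-ups so the inductive step reads cleanly. Everything else is bookkeeping on the exponents of $5$.
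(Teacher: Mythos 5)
Your construction is correct and is essentially the same as the paper's: both start from an extremal $2$-coloring of $K_{R(B_m,B_m)-1}$, use a blow-up of the unique triangle-free $2$-coloring of $K_5$ to add colors two at a time, and use a single complete-bipartite "doubling" step for one extra color when $k$ is odd (the paper performs the doubling as the final step rather than at the base, but the two orderings yield identical vertex counts and identical verifications). The rainbow-triangle and monochromatic-$B_m$ checks you outline are exactly the ones the paper relies on.
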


\begin{proof}
We prove this result by inductively constructing a coloring of $K_{n}$ where
$$
n = \begin{cases}
(R(B_{m}, B_{m}) - 1) \cdot 5^{(k - 2)/2} & \text{if $k$ is even,}\\
2 \cdot (R(B_{m}, B_{m}) - 1) \cdot 5^{(k - 3)/2} & \text{if $k$ is odd,}
\end{cases}
$$
which contains no rainbow triangle and no monochromatic copy of $B_{m}$. For the base of this induction, let $G_{2}$ be a $2$-colored complete graph on $R(B_{m}, B_{m}) - 1$ vertices containing no monochromatic copy of $B_{m}$. Without loss of generality, suppose this coloring uses colors $1$ and $2$.

Suppose we have constructed a coloring of $G_{2i}$ where $i$ is a positive integer and $2i < k$, using the $2i$ colors $1, 2, \dots, 2i$ and having order $n_{2i} = (R(B_{m}, B_{m}) - 1) \cdot 5^{i - 1}$ such that $G_{2i}$ contains no rainbow triangle and no monochromatic copy of $B_{m}$.

If $k = 2i + 1$, we construct $G_{2i + 1} = G_{k}$ by making two copies of $G_{2i}$ and inserting all edges between the copies in color $k$. Then $G_{k}$ certainly contains no rainbow triangle, no monochromatic copy of $B_{m}$, and has order $n = 2 \cdot (R(B_{m}, B_{m}) - 1) \cdot 5^{(k - 3)/2}$.

Otherwise suppose $k \geq 2i + 2$. We construct $G_{2i + 2}$ by making five copies of $G_{2i}$ and inserting edges of colors $2i + 1$ and $2i + 2$ between the copies to form a blow-up of the unique $2$-colored $K_{5}$ which contains no monochromatic triangle, see Figure~\ref{Fig:K5}. This coloring clearly contains no rainbow triangle and, since there is no monochromatic triangle in either of the two new colors, there can be no monochromatic copy of $B_{m}$ in $G_{2i + 2}$, completing the construction.
\end{proof}

\begin{figure}[H]
\begin{center}
\includegraphics{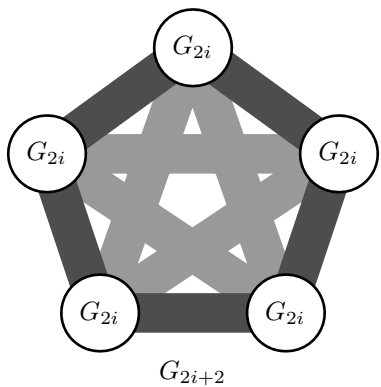}
\end{center}
\caption{An example of this construction \label{Fig:K5}}
\end{figure}

Note that if more than five copies of $G_{2i}$ were used in this construction, there must exist a monochromatic triangle on the edges between the copies, which would produce a monochromatic copy of $B_{m}$ if the copies are large enough.

\section{General Upper Bound}\label{Sec:Up}

Let $R_{m} = R(B_{m} : B_{m})$ and define
$$
R_{m}' = \sum_{i = 1}^{m - 1} [R_{\lceil m/i \rceil} - 1].
$$
This quantity provides a bound on a type of restricted Ramsey number as seen in the following lemma.

\begin{lemma}\label{Lemma:r=0}
For $m \geq 2$, the largest number of vertices in a G-coloring of a complete graph with no monochromatic $B_{m}$ in which all parts of the G-partition have order at most $m - 1$ is at most $R_{m}'$.
\end{lemma}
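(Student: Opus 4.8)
The plan is to induct on $k$, the number of colors, using the Gallai-partition structure. Fix a G-coloring of $K_n$ with no monochromatic $B_m$ all of whose G-partition parts have order at most $m-1$, and take a G-partition with the smallest number of parts; by Theorem~\ref{Thm:G-Part} the reduced graph $R$ uses at most two colors, say red and blue, on $t$ parts $V_1,\dots,V_t$, where minimality forces $t \geq 3$ (otherwise the single edge-color between the two parts together with the colorings inside would merge). First I would observe that since each $|V_i| \leq m-1$, a monochromatic $B_m$ cannot be confined to one part, so it must use an edge of the reduced graph; the key reduction is therefore to control, for each color, how large a monochromatic "near-$B_m$" configuration the reduced structure can host. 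Concretely, consider the red graph on the reduced vertices: if red spans a $B_s$ in the reduced graph for some $s$, i.e. a red edge $V_aV_b$ (the spine) together with $s$ further parts each joined in red to both $V_a$ and $V_b$, then blowing this up yields a red book with $|V_a|\cdot|V_b| \geq 1$ spine choices and $\sum |V_{i}|$ pages over the $s$ page-parts; to avoid a red $B_m$ we need $|V_a|+|V_b| \leq m-1$ along the spine-parts merged view, and more importantly the total number of vertices in parts red-adjacent to a fixed red edge must be at most $m-1$ per such page-bundle — this is exactly where the ceiling terms $R_{\lceil m/i\rceil}$ enter.

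The heart of the argument is the following counting. For each $i$ with $1 \leq i \leq m-1$, say a part $V_j$ is of "type $i$" if $i = |V_j|$ (so it can absorb at most $\lceil m/i \rceil - 1$ other same-type... — more precisely, $i$ parts of size about $m/i$ already overflow a book). Partition the parts by size. Within the parts of a given size $i$, the reduced graph restricted to them is a red/blue coloring of a complete graph, and if it contained a monochromatic $B_{\lceil m/i\rceil}$ in the reduced graph, blowing up would give a monochromatic $B_m$ in $K_n$ (since $i \cdot \lceil m/i \rceil \geq m$). Hence the number of parts of size $i$ is at most $R_{\lceil m/i\rceil} - 1$, contributing at most $i\cdot(R_{\lceil m/i\rceil}-1)$ vertices — but one still has to handle interactions between parts of different sizes. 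I would argue that a part of size $i$ together with parts of larger sizes red-adjacent to a common red edge again forces a red $B_m$ once the vertex totals exceed $m$, so the "worst case" that maximizes $n$ groups parts by size and sums $i\cdot(R_{\lceil m/i\rceil}-1)$ over $i = 1,\dots,m-1$; recognizing $\sum_{i=1}^{m-1} i\,(R_{\lceil m/i\rceil}-1)$ — wait, the stated bound is $\sum_{i=1}^{m-1}(R_{\lceil m/i\rceil}-1)$ without the factor $i$, so the correct accounting must be that distinct sizes occupy disjoint "slots" and the $i$ parts of size $i$ are really being counted by an index substitution — I would instead let $i$ range over a set of representatives and show each size class contributes at most $R_{\lceil m/i\rceil}-1$ \emph{parts}, not vertices, with the per-part size bounded so the product telescopes into the displayed sum. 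Getting this bookkeeping exactly right is the crux.

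The main obstacle I anticipate is precisely this last point: translating the blow-up obstruction into the clean closed form $R_m' = \sum_{i=1}^{m-1}(R_{\lceil m/i\rceil}-1)$ requires pinning down which configuration of part-sizes is extremal and verifying that cross-size and two-color interactions never allow more. A secondary subtlety is the two-colored reduced graph: a monochromatic book in $K_n$ might use a spine edge \emph{inside} a part (length $\leq m-1$ on each side, so possible only in a limited way) rather than a reduced edge, and one must check those cases do not beat the bound. I would dispatch the internal-spine case by noting each part has $\leq m-1$ vertices, so an internal red edge has at most $m-2$ red-common neighbors inside the part, leaving the pages to come from reduced-adjacent parts, which reduces back to the reduced-graph count. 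Once the extremal part-size profile is identified, the remaining computation is the routine identification of the sum, and the inductive step on $k$ is immediate since the reduced graph uses only two colors and any larger monochromatic structure there already certifies a $B_m$.
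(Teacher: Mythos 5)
You have correctly identified the paper's key obstruction: if, for some $i$, more than $R_{\lceil m/i \rceil} - 1$ parts have order at least $i$, then the reduced graph on those parts contains a monochromatic $B_{\lceil m/i \rceil}$, which blows up to a monochromatic $B_{m}$ in $G$ since $i \cdot \lceil m/i \rceil \geq m$. This is exactly the paper's argument. But the step you flag as ``the crux'' --- converting these constraints into the bound $n \leq \sum_{i=1}^{m-1}(R_{\lceil m/i \rceil}-1)$ --- is genuinely missing from your write-up, and the fix you sketch (classifying parts by \emph{exact} size, choosing ``representatives,'' hoping the product ``telescopes'') does not work: bounding the number of parts of exact size $i$ by $R_{\lceil m/i \rceil}-1$ and multiplying by $i$ gives $\sum_i i(R_{\lceil m/i \rceil}-1)$, which is strictly weaker than the claimed bound, and there is no extremal part-size profile to pin down.

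The missing idea is a threshold (layer-cake) double count, applied to parts of order \emph{at least} $i$ rather than exactly $i$. Since every part $H$ has $1 \leq |H| \leq m-1$, one has the identity
$$
n \;=\; \sum_{\text{parts } H} |H| \;=\; \sum_{\text{parts } H} \; \sum_{i=1}^{|H|} 1 \;=\; \sum_{i=1}^{m-1} \#\{\, H : |H| \geq i \,\},
$$
and each summand on the right is at most $R_{\lceil m/i \rceil} - 1$ by the obstruction you already proved (note that a part of order $j$ is simultaneously subject to the constraint for every $i \leq j$; the constraints are nested, not disjoint, which is why no factor of $i$ appears). This immediately gives $n \leq R_{m}'$ with no case analysis of part-size profiles and no cross-size interaction argument. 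Your secondary worry about books whose spine lies inside a part is also moot here: the lemma only requires \emph{exhibiting} one monochromatic $B_{m}$ when $n$ is too large, and the reduced-graph blow-up already does that, so no other configurations need to be ruled out.
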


\begin{proof}
Suppose, for a contradiction, that $G$ is a $k$-coloring of $K_{n}$ containing no rainbow triangle and no monochromatic copy of $B_{m}$ where
$$
n = \sum_{i = 1}^{m - 1} [R_{\lceil m/i \rceil} - 1] + 1,
$$
and suppose all parts of a G-partition of $G$ have order at most $m - 1$. There are certainly at most $R_{m} - 1$ parts in this G-partition since otherwise the reduced graph would contain a monochromatic copy of $B_{m}$. Similarly, there are at most $R_{\lceil m/2 \rceil} - 1$ parts of order at least $2$ in the G-partition since otherwise the reduced graph of this subset of parts would contain a monochromatic copy of $B_{\lceil m/2 \rceil}$, which implies the existence of a monochromatic copy of $B_{m}$ in $G$.

More generally, for each integer $i$ with $1 \leq i \leq m - 1$, there are at most $R_{\lceil m/i \rceil} - 1$ parts of order at least $i$ in the G-partition since otherwise the reduced graph of this subset of parts would contain a monochromatic copy of $B_{\lceil m/i \rceil}$, which creates a monochromatic copy of $B_{m}$ in $G$. Summing over the values of $i$, we see that
$$
|G| = n \leq \sum_{i = 1}^{m - 1} [R_{\lceil m/i \rceil} - 1] < n,
$$
a contradiction, completing the proof of Lemma~\ref{Lemma:r=0}.
\end{proof}

Note that for $m$ large, since $R_{m} \sim (4 + o(1))m$ (see \cite{RS78}), we get $R_{m}' \sim (4 + o(1))m\ln [(4 + o(1))m]$. In particular, for some small values of $m$, we compute
\beqs
R_{2}' & = & 9,\\
R_{3}' & = & 22,\\
R_{4}' & = & 35, \text{ and}\\
R_{5}' & = & 51.
\eeqs

Call a color \emph{$m$-admissible} if it induces a subgraph with maximum degree at least $m$, and \emph{$m$-inadmissible} otherwise.

\begin{lemma}\label{Lemma:1admissible}
Given integers $m \geq 2$ and $k \geq 2$, let $n$ be the largest number of vertices in a $k$-coloring of a complete graph in which there is
\bi
\item no rainbow triangle,
\item no monochromatic $B_{m}$,
\item a G-partition with all parts having order at most $m - 1$, and
\item only one $m$-admissible color.
\ei
Then
$$
n \leq \begin{cases}
3m - 1 & \text{ if $k = 2$,}\\
5m - 5 & \text{ otherwise.}
\end{cases}
$$
\end{lemma}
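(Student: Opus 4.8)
The plan is to analyze the G-partition directly, using the hypothesis that every part has order at most $m-1$ and that exactly one color, say color $1$, is $m$-admissible. The key structural observation is this: in the reduced graph (a complete graph on the parts, using at most two colors by Theorem~\ref{Thm:G-Part}), consider any color $c \neq 1$. Since $c$ is $m$-inadmissible, every vertex of $K_n$ has fewer than $m$ incident edges of color $c$; in particular, in the reduced graph, if a part $P$ sends color-$c$ edges to parts whose orders sum to at least $m$, then any vertex of $P$ together with those $\geq m$ vertices gives a monochromatic $B_m$ (the spine being the edge from the vertex of $P$ to one vertex in one of the other parts, wait---more carefully: we need the spine inside one color-$c$ edge and $m$ further common neighbors). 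So I would first establish the clean sub-claim: for each color $c \neq 1$ and each part $P$, the total order of parts joined to $P$ in color $c$ is at most $m-1$ (else we build a monochromatic $B_m$ using a color-$c$ edge from $P$ as spine and $m$ vertices among the color-$c$ neighbors, noting $|P| \le m-1$ handles the other endpoint's page count). Combined with $m$-inadmissibility of $c$, this bounds how much of $K_n$ can be "reached" in color $c$.

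Next I would split on the number of colors actually appearing on edges between parts of the G-partition, which is at most two. \textbf{Case $k = 2$ (really: at most one color besides color $1$ in the reduced graph, or handle $k=2$ via $R_m'$-type reasoning).} Actually the cleanest route for $k=2$: the reduced graph is $2$-colored with colors $1$ and $2$, color $2$ is $m$-inadmissible, and by the sub-claim every part sends color-$2$ edges to at most $m-1$ total vertices. Since the part itself has at most $m-1$ vertices, and everything else must be reached in color $1$ from any fixed part $P$, a vertex $v \in P$ has at least $n - |P| - (m-1) \ge n - 2(m-1)$ neighbors in color $1$; if this is at least $m$ and there is a color-$1$ edge among them or incident to $v$ in a way forming the spine, we get a monochromatic $B_m$. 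Pushing this: if $n \ge 3m$, then $v$ has at least $m+1 > m$ color-$1$ neighbors outside $P$; picking the spine to be a color-$1$ edge from $v$ into another part (which exists since color $2$ can't cover everything) and $m$ further common color-$1$ neighbors yields $B_m$ in color $1$. This should yield $n \le 3m - 1$.

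\textbf{Case $k \ge 3$.} Now the reduced graph may genuinely use two colors, neither of which is the unique admissible color $1$ unless $1$ is one of them; but there could also be colors appearing only inside parts. The governing constraint is: at most two colors on the reduced graph, each color there is either $1$ or $m$-inadmissible, and by the sub-claim each part reaches at most $m-1$ vertices in each non-admissible reduced color. A fixed vertex $v$ in part $P$ thus has all of $K_n \setminus P$ covered by: color $1$ (unboundedly, a priori), plus at most $m-1$ vertices in each of the (at most two) non-admissible reduced colors. If color $1$ is one of the two reduced colors, then $v$ has $\ge n - |P| - (m-1) \ge n - 2(m-1)$ color-$1$ neighbors, and $n \ge 5m-4$ forces $\ge 3m-3 \ge m$ of them (for $m\ge 2$), giving $B_m$ in color $1$ as before. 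If color $1$ is \emph{not} among the two reduced colors, then color $1$ appears only inside parts, so its maximum degree is at most $|P|-1 \le m - 2 < m$, contradicting admissibility of color $1$ unless $n$ is small---but wait, color $1$ must be admissible by hypothesis, so some vertex has color-$1$ degree $\ge m$; that vertex's color-$1$ neighborhood lies within its own part, forcing a part of order $\ge m+1 > m-1$, contradicting the part-size hypothesis. Hence color $1$ is necessarily one of the two reduced colors, and we are in the previous subcase: $n \le 5m - 5$ (the exact constant coming from a careful count of $|P|$ and the two inadmissible contributions, where at most one of the two reduced colors is inadmissible).

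\textbf{Main obstacle.} The delicate point is pinning down the exact constants $3m-1$ and $5m-5$ rather than merely $O(m)$: this requires being careful that when color $1$ is one of two reduced colors, the \emph{other} reduced color contributes at most $m-1$ to a vertex's non-color-$1$ reach, while the part $P$ contributes at most $m-1$, but these two sets overlap in a controlled way (the part $P$ versus parts joined to $P$ in the other color are disjoint), so the correct bound is $n - 1 \le (m-1) + (m-1) + (\text{color-1 budget } m-1)$ giving $n \le 3m-2$ in the two-reduced-color case---and the extra slack to reach $5m-5$ when $k \ge 3$ comes from the possibility that \emph{both} reduced colors differ from $1$ being excluded as above, leaving a genuine accounting of why $5m-5$ and not $3m-2$; this likely hinges on a scenario where the G-partition has few parts and color $1$ is confined awkwardly, and getting that extremal configuration exactly right is where the real work lies. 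I would also need to double-check the $B_m$-embedding argument handles the spine endpoint lying in a part of order up to $m-1$ correctly, since that endpoint already has up to $m-2$ pages "used up" within its own part.
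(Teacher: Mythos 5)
Your overall direction is right (exactly one admissible color, say red; every other color reaches at most $m-1$ vertices from any part; build a red $B_{m}$), but the argument as written has a genuine gap in the $k\ge 3$ case, and you acknowledge as much when you defer ``the real work'' of reconciling your count with the constant $5m-5$. The problem is that you only ever bound the neighborhood of a \emph{single} vertex $v$: having $n-2(m-1)\ge m$ red neighbors of $v$ does not produce a $B_{m}$, and there is no a priori ``color-$1$ budget'' of $m-1$ for an admissible color, which is why your back-of-envelope count $n\le 3m-2$ comes out wrong. What is needed is a count of the \emph{common} red neighborhood of both endpoints of a prospective spine, and this is precisely what generates both constants. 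For $k=2$: take a red edge $uv$; of the $n-2$ remaining vertices at most $m-1$ have blue edges to $u$ and at most $m-1$ to $v$ (blue is $m$-inadmissible), so $n\ge 3m$ leaves $(3m-2)-2(m-1)=m$ common red neighbors and a red $B_{m}$. For $k\ge 3$: the spine endpoints must be charged for their own parts as well, since a vertex may have further non-red edges (in colors confined inside its part). The clean way to do this is to pick two whole \emph{parts} $H_{1},H_{2}$ of the G-partition joined in red (red must appear between parts, as you correctly argue); then $|G\setminus(H_{1}\cup H_{2})|\ge n-2(m-1)$, at most $m-1$ of these vertices have blue edges to $H_{1}$ and at most $m-1$ to $H_{2}$, so $n\ge 5m-4$ yields $(5m-4)-4(m-1)=m$ vertices with red edges to both $H_{1}$ and $H_{2}$, i.e.\ a red $B_{m}$ whose spine is any red edge between $H_{1}$ and $H_{2}$. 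This two-sided count is the missing step; once you insert it, the case analysis you set up (including correctly ruling out the possibility that the admissible color does not appear in the reduced graph) goes through and matches the intended proof.
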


\begin{proof}
First suppose $k = 2$ and let $G$ be a $2$-coloring of $K_{3m}$ with no monochromatic $B_{m}$, supposing that only one of the two colors is $m$-admissible. Note that $G$ trivially contains no rainbow triangle since it uses only $2$ colors. Say red is the $m$-admissible color and blue is the other ($m$-inadmissible) color. Let $u$ and $v$ be any two vertices of $G$ that are joined by a red edge. Then there are $3m - 2$ other vertices in $G$ but at most $m - 1$ of them can have blue edges to $u$ and at most $m - 1$ of them can have blue edges to $v$. This leaves at least 
$$
(3m - 2) - 2(m - 1) \geq m
$$
vertices with red edges to both $u$ and $v$, creating a red copy of $B_{m}$, for a contradiction.

Now suppose $k \geq 3$ and let $G$ be a $k$-coloring of $K_{5m - 4}$ with no monochromatic $B_{m}$ and suppose only one of the colors is $m$-admissible. Consider a G-partition with all parts having order at most $m - 1$ as provided by the hypothesis. Say red is the $m$-admissible color, so red must clearly appear in the G-partition, and let blue be the other color appearing in the G-partition. Choose two parts of this partition with red edges between them, say $H_{1}$ and $H_{2}$. Then since $|G| = 5m - 4$, there are at least $3m - 2$ remaining vertices in $G \setminus (H_{1} \cup H_{2})$. At most $m - 1$ of these remaining vertices can have blue edges to $H_{1}$ and at most $m - 1$ of these vertices can have blue edges to $H_{2}$. This means that at least $m$ vertices must have red edges to both $H_{1}$ and $H_{2}$, producing a red copy of $B_{m}$, for a contradiction.
\end{proof}



Let $\ell = \ell(m)$ be the number of colors that are $m$-inadmissible and define the quantity $gr_{k, \ell}(K_{3} : H)$ to be the minimum integer $n$ such that every $k$ coloring of $K_{n}$, with at least $\ell$ different $m$-inadmissible colors, contains either a rainbow triangle or a monochromatic copy of $H$. We may now state our main result, which provides a general upper bound on the Gallai-Ramsey numbers for any book with any number of colors.

\begin{theorem}\label{Thm:MainBm}
Given positive integers $k \geq 1$, $m \geq 3$, and $0 \leq \ell \leq k$, let
$$
gr_{k, \ell, m} = \begin{cases}
m + 2 - \ell & \text{ if $k = 1$,}\\
R_{m}' \cdot 5^{\frac{k-2}{2}}+1 - (m - 1)\ell & \text{ if $k$ is even,}\\
2 \cdot R_{m}' \cdot 5^{\frac{k-3}{2}}+1 - (m - 1)\ell & \text{ if $k \geq 3$ is odd.}
\end{cases}
$$
Then
$$
gr_{k, \ell}(K_{3} : B_{m}) \leq gr_{k, \ell, m}.
$$
\end{theorem}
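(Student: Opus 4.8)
The plan is to induct on $k$, using Theorem~\ref{Thm:G-Part} to pass from $K_n$ to a reduced graph on at most two colors, and then to separate cases according to the structure of the G-partition — in particular, the sizes of the parts. The base case $k=1$ asserts $gr_{1,\ell}(K_3:B_m)\le m+2-\ell$: with one color, we need a monochromatic $B_m$, i.e.\ $K_{m+2}$; an ``$m$-inadmissible'' single color contributes nothing since having a color of max degree $\le m-1$ on $K_n$ forces $n\le m$, but the bookkeeping with $\ell$ should be checked to match the stated formula. The even/odd split for $k\ge 2$ is driven by the lower-bound construction of Theorem~\ref{Thm:GR-Bm-LowBd}, and the $-(m-1)\ell$ correction reflects that each $m$-inadmissible color, by definition, has all monochromatic stars of size $\le m-1$, so it can ``save'' only boundedly many vertices.

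The heart of the argument is the inductive step for $k\ge 2$. Given a G-coloring of $K_n$ with $n = gr_{k,\ell,m}$, no rainbow triangle, and at least $\ell$ distinct $m$-inadmissible colors, take a G-partition and let $t$ be its number of parts. First I would handle the case where some part $H$ is large — say $|H|\ge m$. Then the induced coloring on $H$ is a G-coloring on $\ge m \ge$ (something) vertices with one fewer effective color available ``from outside,'' but more usefully: a vertex $v\notin H$ sees all of $H$ in a single color $c$ (the color between $v$'s part and $H$), so if $|H|\ge m$ then $v$ together with an edge inside $H$ of color $c$ would build toward a monochromatic $B_m$; more carefully, if $\ge m+1$ vertices outside $H$ connect to $H$ in the same color $c$ and $H$ contains a $c$-edge, we get a $B_m$ in color $c$. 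This pushes us toward the situation where all parts are small, i.e.\ of order $\le m-1$, which is exactly the hypothesis of Lemmas~\ref{Lemma:r=0} and~\ref{Lemma:1admissible}. So the case analysis reduces to: either a part is big (recurse into it, or extract the book directly), or all parts are small.

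In the all-parts-small case, the reduced graph uses two colors, say red and blue. The number of parts $t$ is then controlled by Lemma~\ref{Lemma:r=0}-type reasoning (at most $R_m'$ vertices if no color on the reduced graph is $m$-admissible, but that is too weak here), so instead I would count $m$-admissible colors. If at most one color is $m$-admissible, Lemma~\ref{Lemma:1admissible} bounds $n$ by $3m-1$ or $5m-5$, which must be checked against $gr_{k,\ell,m}$. If at least two colors are $m$-admissible, pick two parts $H_1,H_2$ joined (in the reduced graph) by color $1$; the remaining $\ge n - 2(m-1)$ vertices each see $H_1$ and $H_2$, and at most $2(m-1)$ of them avoid color $1$ to both, leaving $\ge n-4(m-1)$ with color $1$ to both — if this is $\ge m$ we win, so we may assume $n < 5m-3$, again a small bound. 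The serious regime is therefore when the number of parts is comparable to $R_m'$ and several colors recur: here I would collapse all parts into single vertices, obtain a $2$-colored $K_t$, and recurse on the reduced graph — but the reduced graph uses only $2$ colors while the ambient graph uses $k$, so the right move is to recurse on one monochromatic class of the reduced graph (a part together with everything joined to it in, say, blue), which is a G-coloring using $\le k-1$ colors, and apply the inductive hypothesis with the multiplicative jump of $5$ (resp.\ $2$) accounting for the even-odd recursion. Matching the $5^{(k-2)/2}$ factor is exactly where one shows $t \le R_m'$ forces, via the product $R_m'\cdot 5^{(k-2)/2}$ structure, that one of the two color classes on the reduced graph has enough parts to recurse. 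The main obstacle will be the careful accounting of the $-(m-1)\ell$ term through the recursion — ensuring that the $\ell$ $m$-inadmissible colors are correctly ``charged'' whether they appear in the reduced graph or are buried inside a part — and verifying that the small-case bounds from Lemma~\ref{Lemma:1admissible} never exceed $gr_{k,\ell,m}$, which is a finite but fiddly numerical check for each residue of $k$ and each range of $m$.
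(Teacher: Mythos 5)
Your skeleton (Gallai partition, induction, a dichotomy between large parts and all-small parts, with Lemmas~\ref{Lemma:r=0} and~\ref{Lemma:1admissible} handling the all-small case) matches the paper's outline, but the proposal has genuine gaps exactly where the real work lies. First, your dichotomy is too coarse: the hard regime is not ``one huge part'' versus ``all parts small,'' but the mixed case where a small number $r$ of parts have order $\geq m$ (the paper treats $r=1,2,3$ and $r\geq 4$ separately) while the rest are small. Recursing into a single large part $H_1$ only closes the induction if you can bound $|G\setminus H_1|$ by a quantity like $O(m)$, and that bound is not automatic --- the paper obtains it by partitioning $G\setminus H_1$ according to the color of edges to $H_1$ and invoking a fact (Fact~\ref{Fact:4mMonochi}) that a one-color-between-parts Gallai coloring on $\geq 3m-2$ vertices with small parts contains a $B_m$. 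Nothing in your sketch supplies this, and without it the recursion $|H_1|\leq gr_{k-1,\ell+1,m}-1$ does not contradict $n=gr_{k,\ell,m}$.

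Second, your argument in the all-parts-small case with two $m$-admissible colors is wrong: you claim that of the $\geq n-2(m-1)$ vertices outside $H_1\cup H_2$, ``at most $2(m-1)$ of them avoid color $1$ to both.'' That bound only holds when the \emph{other} color is $m$-inadmissible (a vertex of $H_1$ would otherwise acquire $\geq m$ neighbors in that color); when both colors are admissible, arbitrarily many vertices may join $H_1$ in blue. This is precisely why the paper does \emph{not} argue this way and instead uses Lemma~\ref{Lemma:r=0} directly for the $r=0$ case --- which you dismiss as ``too weak,'' but it is not: it gives $|G|\leq R_m'$, while $n\geq 2R_m'-3(m-1)>R_m'$ for $k\geq 3$. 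Third, your proposed recursion ``on one monochromatic class of the reduced graph (a part together with everything joined to it in blue)'' does not reduce the number of colors: such a set can still use all $k$ colors internally. The correct recursion is into a single part $H_i$ of order $\geq m$, inside which the reduced-graph color(s) incident to $H_i$ cannot occur (or become $m$-inadmissible), yielding $|H_i|\leq gr_{k-1,\ell+1,m}-1$ or $gr_{k-2,\ell,m}-1$. As written, the proposal would not compile into a proof without substantial new arguments for each of these points.
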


\begin{proof}
For a contradiction, suppose $G$ is a $k$-coloring of $K_{n}$ with
$$
n = gr_{k, \ell, m}
$$
containing no rainbow triangle and no monochromatic copy of $B_{m}$ with the property that there are at least $\ell$ different $m$-inadmissable colors in $G$. We prove the result by induction on $k - \ell$. If $k = 1$, then Theorem~\ref{Thm:MainBm} is immediate so next suppose $k = 2$. Then for any $m \geq 3$, it is easy to verify that
$$
n = R_{m}' + 1 - \ell(m - 1) \geq R_{m}' + 1 - 2(m - 1) \geq R_{m},
$$
meaning that $G$ contains a monochromatic $B_{m}$, for a contradiction. In particular, if $m = 3$, we have $n = 19 > 14 = R_{3}$ and if $m = 4$, we have $n = 29 > 18 = R_{4}$.

For the remainder of the proof, we may assume $k \geq 3$, and so $n \geq 2 R_{m}' - 3(m - 1)$. If $k - \ell = 0$, then there are no $m$-admissible colors but with
$$
n \geq 2 R_{m}' - 3(m - 1) \geq 5m/2,
$$
by Theorem~\ref{Thm:Stars}, there is a vertex with at least $\frac{2n}{5} > m$ edges in a single color, so this color would be $m$-admissible, a contradiction.

Next suppose $k - \ell = 1$, and consider a G-partition. If there are only two parts, then either both parts have at least $m$ vertices or one part has fewer than $m$ vertices. If both parts have at least $m$ vertices, then neither part contains any edges in this color, reducing $k$ by $1$ and reducing the problem to the case $k - \ell = 0$. Applying induction on $k - \ell$ within a largest part yields the desired result. On the other hand, if one part has fewer than $m$ vertices, the vertices in this smaller part may be removed, yielding a graph with one more $m$-inadmissible color (within the larger part), again reducing the problem to the case $k - \ell = 0$. More generally, if there are more than two parts, none can have order at least $m$ since the only color with edges to a part of order at least $m$ must be $m$-admissible, yielding a bipartition as above. With all parts having order at most $m - 1$, but only one $m$-admissible color, Lemma~\ref{Lemma:1admissible} implies the existence of a monochromatic $B_{m}$ in the $m$-admissible color, to complete the case $k - \ell = 1$.

This means we may assume $k - \ell \geq 2$. 
There is a G-partition of $G$, say using colors red and blue. Consider such a G-partition with the smallest number of parts, say $t$. If $t \geq R_{m}$, then by Theorem~\ref{Thm:RamseyBm}, the reduced graph of this partition contains a monochromatic copy of $B_{m}$, a contradiction. We may therefore assume $t \leq R_{m} - 1$. 
Let $H_{1},H_{2},\dots,H_{t}$ be parts of such a G-partition.

First suppose $t \leq 3$. If $t = 3$, then the reduced graph is a triangle but this contains a bipartition with only one color on the edges between the parts so we may assume $t=2$. Let red be the color of the edges between the two parts. If $|H_{1}| \leq m - 1$, then red is $m$-inadmissible within $H_{2}$ so we remove $H_{1}$ from the graph and apply induction on $k - \ell$ within $H_{2}$. This means that $|H_{1}|, |H_{2}| \geq m$, but then there can be no red edges within either $H_{1}$ or $H_{2}$. By induction on $k - \ell$, this gives
\beqs
n & = & |H_{1}| + |H_{2}|\\
~ & \leq & 2 \left[ gr_{k - 1, \ell, m} - 1\right]\\
~ & < & gr_{k, \ell, m},
\eeqs
a contradiction. We may therefore assume $t \geq 4$ and, by the minimality of $t$, each part of the G-partition has incident edges in both colors that appear in the G-partition. Suppose red and blue are the two colors appearing in the G-partition.

If one of red or blue was $m$-inadmissible in $G$, then since both colors must induce a connected subgraph of the reduced graph, all parts must have order less than $m$, so by Lemma~\ref{Lemma:r=0}, we have $|G| \leq R_{m}'$.
This is a contradiction since $k \geq 3$, so red and blue must both be $m$-admissible within $G$.

Let $r$ be the number of parts of the G-partition with order at least $m$, say with
$$
|H_{1}|, |H_{2}|, \dots, |H_{r}| \geq m ~ ~ ~ ~ \text{and} ~ ~ ~ ~ |H_{r + 1}|, |H_{r + 2}|, \dots, |H_{t}| \leq m - 1.
$$

If $r \geq 4$ and $t \geq 6$, then any choice of $6$ parts containing the $4$ parts $\mathscr{H} = \{H_{1}, H_{2}, H_{3}, H_{4}\}$ will contain a monochromatic triangle in the reduced graph. Such a triangle must contain at least one part from $\mathscr{H}$, meaning that the corresponding subgraph of $G$ must contain a monochromatic copy of $B_{m}$, for a contradiction. Thus, we may assume either $4 \leq t \leq 5$ or $r \leq 3$. We consider cases based on the value of $r$. Note that the case $r = 0$ follows from Lemma~\ref{Lemma:r=0}.

The following cases are actually sharp in the sense that the remainder of this proof will be used (as is) to complete the proof of Theorem~\ref{Thm:GR-Bm-Small} in Section~\ref{Sec:Small}.

\setcounter{case}{0}
\begin{case}
$r = 1$.
\end{case}

Since $t \geq 4$, both red and blue are $m$-inadmissible within $H_{1}$ so if $|H_{1}| \geq n - 2(m - 1)$, we may simply apply induction on $k - \ell$, so suppose $|H_{1}| \leq n - 2(m - 1) - 1$. This means that there are at least $m$ vertices in $G \setminus H_{1}$ with the same color on all edges to $H_{1}$, say red. This implies that there are no red edges within $H_{1}$ so $|H_{1}| \leq gr_{k - 1, \ell + 1, m} - 1 < \frac{n}{2}$.

Let $H_{r}$ (and $H_{b}$) be the subsets of vertices of $G \setminus H_{1}$ that have all red (or blue respectively) edges to $H_{1}$. To avoid a red $B_{m}$, all edges between the parts contained in $H_{r}$ must be blue and similarly all edges between the parts contained in $H_{b}$ must be red. At this point, we observe an easy fact.

\begin{fact}\label{Fact:4mMonochi}
For any $m \geq 3$ and $n' \geq 3m - 2$, in any G-coloring of a $K_{n'}$ in which only one color appears between the parts and all parts have order at most $m - 1$, there is a monochromatic copy of $B_{m}$.
\end{fact}

Therefore, by Fact~\ref{Fact:4mMonochi}, we have $|H_{r}|, |H_{b}| \leq 3m - 3$, so $|G \setminus H_{1}| \leq 6m - 6$ and
$$
gr_{k, \ell, m} - (gr_{k - 1, \ell + 1, m} - 1) \leq 6m - 6.
$$
More specifically, if $|H_{b}| \geq m$, then $|H_{1}| < gr_{k - 2, \ell, m}$ so
$$
gr_{k, \ell, m} - (gr_{k - 2, \ell, m} - 1) \leq 6m - 6,
$$
which means that $k \leq 2$, a case that has already been considered.
On the other hand, if $|H_{b}| \leq m - 1$, then
$$
gr_{k, \ell, m} - (gr_{k - 1, \ell + 1, m} - 1) \leq 4m - 4.
$$
This again implies that $k \leq 2$, a case that has already been considered, completing the proof in this case.

\begin{case}
$r = 2$.
\end{case}

Suppose red is the color of the edges between $H_{1}$ and $H_{2}$ so neither $H_{1}$ nor $H_{2}$ can contain red edges and recall that blue must be $m$-inadmissible within $H_{1}$ and $H_{2}$. If there are at most $2(m - 1)$ vertices in $G \setminus (H_{1} \cup H_{2})$, then it is easy to show that $n < gr_{k, \ell}(K_{3} : B_{m})$ for a contradiction, so we may assume $|G \setminus (H_{1} \cup H_{2})| \geq 2(m - 1) + 1$.

For $i \in \{1, 2\}$, let $A_{i}$ be the set of vertices in $G \setminus (H_{1} \cup H_{2})$ with red edges to $H_{i}$. Note that $A_{1} \cap A_{2} = \emptyset$ since any intersection would create a red copy of $B_{m}$. Then for $i \in \{1, 2\}$, let $C_{i} = G \setminus (H_{1} \cup H_{2} \cup A_{3 - i})$, so $C_{i}$ has all edges to $H_{3 - i}$ in blue. Since $|G \setminus (H_{1} \cup H_{2})| \geq 2(m - 1) + 1$, at least one of $C_{1}$ or $C_{2}$ contains at least $m$ vertices, say $|C_{1}| \geq m$. Then $H_{2}$ contains no blue edges.

To avoid a blue copy of $B_{m}$, for each $i \in \{1, 2\}$, there can be no blue edges within $C_{i}$, meaning that all edges between the parts of the G-partition within $C_{i}$ are red, so by Fact~\ref{Fact:4mMonochi}, we know $|C_{i}| \leq 4m - 6$ for each $i \in \{1, 2\}$. On the other hand, if $|C_{2}| \geq m$, then $H_{1}$ also contains no blue edges and must therefore be smaller. This means that
\beqs
|G| & \leq & |H_{1}| + |H_{2}| + |C_{1} \cup C_{2}|\\
~ & \leq & [gr_{k - 1, \ell + 1, m} - 1] + [gr_{k - 2, \ell, m} - 1] + (4m - 6) + (m - 1)\\
~ & < & gr_{k, \ell, m},
\eeqs
a contradiction for $k \geq 3$, completing the proof in this case.

\begin{case}
$r = 3$.
\end{case}

To avoid a monochromatic $B_{m}$, the triangle in the reduced graph corresponding to the parts $\{H_{1}, H_{2}, H_{3}\}$ must not be monochromatic. Without loss of generality, suppose the edges from $H_{1}$ to $H_{2}$ are red and all other edges between these parts are blue. Then $H_{1}$ and $H_{2}$ contain no red or blue edges while $H_{3}$ contains no blue edges and red is $m$-inadmissible within $H_{3}$.

If there is a vertex $v \in G \setminus (H_{1} \cup H_{2} \cup H_{3})$ with blue edges to $H_{3}$, then to avoid creating a blue $B_{m}$, $v$ must have all red edges to both $H_{1}$ and $H_{2}$, producing a red $B_{m}$. This means that all edges between $H_{3}$ and $G \setminus (H_{1} \cup H_{2} \cup H_{3})$ must be red. If $|G \setminus (H_{1} \cup H_{2} \cup H_{3})| \leq m - 1$, then
\beqs
|G| & \leq & |H_{1}| + |H_{2}| + |H_{3}| + (m - 1)\\
~ & \leq & 2[gr_{k - 2, \ell, m} - 1] + [gr_{k - 1, \ell + 1, m} - 1] + (m - 1)\\
~ & < & gr_{k, \ell, m},
\eeqs
a contradiction, so suppose $|G \setminus (H_{1} \cup H_{2} \cup H_{3})| \geq m$. Then $H_{3}$ contains no red edges and all edges between parts within $G \setminus (H_{1} \cup H_{2} \cup H_{3})$ must be blue. By Fact~\ref{Fact:4mMonochi}, we get
\beqs
|G| & \leq & |H_{1}| + |H_{2}| + |H_{3}| + (4m - 6)\\
~ & \leq & 3[gr_{k - 2, \ell, m} - 1] + (4m - 6)\\
~ & < & gr_{k, \ell, m},
\eeqs
again a contradiction, completing the proof in this case.

\begin{case}
$r \geq 4$.
\end{case}

As observed previously, this implies that $4 \leq t \leq 5$. Looking only at the subgraph of the reduced graph induced on the $r$ large parts, there can be no monochromatic triangle. If $r = 5$, there is only one coloring of $K_{5}$ with no monochromatic triangle and if $r = 4$, there are two colorings of $K_{4}$ with no monochromatic triangle. In all of these colorings, every vertex has an incident edge in both colors, meaning that all of the $r$ corresponding parts of order at least $m$ must have no red or blue edges. Then
\beqs
|G| & \leq & \sum_{i = 1}^{t} |H_{i}|\\
~ & \leq & 5 [gr_{k - 2, \ell, m} - 1]\\
~ & < & gr_{k, \ell, m},
\eeqs
a contradiction, completing the proof of this case, and therefore the proof of Theorem~\ref{Thm:MainBm}.
\end{proof}

\section{Some Small Cases}\label{Sec:Small}

In this section, we provide the sharp Gallai-Ramsey number for several small books. The proof of this result follows the same outline as the proof of Theorem~\ref{Thm:MainBm} except each step is improved in order to produce the sharp result.

In place of Lemma~\ref{Lemma:r=0}, we provide the following lemma. For given values of $k, \ell, m$, define the quantity
$$
gr_{k, \ell, m} = \begin{cases}
m + 2 - \ell & \text{if $k = 1$},\\
(R_{m} - 1) \cdot 5^{(k - 2)/2} + 1 - \ell (m - 1) & \text{if $k$ is even,}\\
2 \cdot (R_{m} - 1) \cdot 5^{(k - 3)/2} + 1 - \ell (m - 1) & \text{if $k \geq 3$ is odd.}
\end{cases}
$$

\begin{lemma}\label{Lemma:Smallr=0}
Let $k, \ell, m$ be integers with $k \geq 3$, $0 \leq \ell \leq k - 2$, and $2 \leq m \leq 5$. If $G$ is a G-coloring of $K_{p}$ with
$$
p \geq gr_{k, \ell, m}
$$
using $k$ colors in which all parts of a G-partition have order at most $m - 1$ and $\ell$ colors are $m$-inadmissible, then $G$ contains a monochromatic copy of $B_{m}$.
\end{lemma}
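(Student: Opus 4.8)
The plan is to mimic the proof of Theorem~\ref{Thm:MainBm}, but with $R_m'$ replaced by the smaller quantity $R_m - 1$ appearing in the new definition of $gr_{k,\ell,m}$, and to exploit the extra structural information available when $m \le 5$. First I would set up a contradiction: suppose $G$ is a $G$-coloring of $K_p$ with $p \ge gr_{k,\ell,m}$, $\ell$ of the colors $m$-inadmissible, all parts of some $G$-partition of order at most $m-1$, and no monochromatic $B_m$. I would proceed by induction on $k - \ell$ (the number of $m$-admissible colors), exactly as in Theorem~\ref{Thm:MainBm}. The base cases $k - \ell \in \{0, 1\}$ should go through verbatim: for $k - \ell = 0$, Theorem~\ref{Thm:Stars} forces a vertex of monochromatic degree $> m$, contradicting $m$-inadmissibility; for $k - \ell = 1$, Lemma~\ref{Lemma:1admissible} (whose bound $5m - 5$ must be checked against $gr_{k,\ell,m}$ with only one $m$-admissible color) produces a monochromatic $B_m$. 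I would verify the numeric inequalities $gr_{k,\ell,m} \ge 5m-5$ (resp. $\ge R_m$ in the small $k$ cases) hold for each $m \in \{2,3,4,5\}$ using the values of $R_m$ from Theorem~\ref{Thm:RamseyBm}.

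For $k - \ell \ge 2$, I would take a $G$-partition with the minimum number of parts $t$, using colors red and blue on the edges between parts. Since $t \le R_m - 1$ (else the reduced graph contains a monochromatic $B_m$ by the actual value of $R_m$, not an estimate), the argument splits on $t$ just as before: $t \le 3$ reduces to a bipartition, and then one either removes a small part (gaining an $m$-inadmissible color and applying induction) or finds both parts of order $\ge m$, forcing both to be monochromatic-free in red and blue and applying induction twice. For $t \ge 4$, minimality of $t$ gives that every part sees both red and blue; if either color is $m$-inadmissible in $G$ then all parts have order $< m$ and Lemma~\ref{Lemma:r=0} (via $R_m' $) would apply — but here I want the sharper conclusion, so instead I would invoke the key fact that all parts have order at most $m - 1$ (given by hypothesis) together with the case analysis on $r$, the number of parts of order $\ge m$. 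Since parts have order at most $m-1$ by hypothesis, in fact $r = 0$ always, and the whole case analysis collapses to a single case.

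This is the crucial simplification: the hypothesis of Lemma~\ref{Lemma:Smallr=0} already stipulates that all parts have order at most $m - 1$, so $r = 0$, and I only need to rule out the configuration where $G$ decomposes into a $2$-colored reduced graph on $t \le R_m - 1$ parts, each of order at most $m - 1$, with no monochromatic $B_m$. The sharp bound then comes from a refined count: in the reduced graph (a $2$-coloring of $K_t$ with $t \le R_m - 1$), one tracks how many parts have order exactly $i$ for each $1 \le i \le m - 1$, and uses that the parts of order $\ge i$ form a $2$-colored complete subgraph with at most $R_{\lceil m/i\rceil} - 1$ vertices — but to get $R_m - 1$ rather than $R_m'$ one must additionally use that a $B_m$-free $2$-coloring of the reduced graph, combined with the part sizes, cannot be too large; concretely, the total $p = \sum |H_i|$ is maximized by taking all parts of size $m - 1$, giving $p \le (R_m - 1)$ only when $\lceil m/(m-1)\rceil = 2$ forces the relevant Ramsey bound. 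The main obstacle will be precisely this refined counting argument showing $p \le gr_{k,\ell,m}$ in the $r = 0$ case: naively one only gets $R_m'$, so the proof must leverage the specific small values $m \le 5$ and the exact Ramsey numbers $R_2 = 10$, $R_3 = 14$, $R_4 = 18$, $R_5 = 21$ to close the gap, likely by a short case check on $m$ rather than a uniform estimate. I would organize this as: (i) reduce to $r = 0$; (ii) for each $m \in \{2,3,4,5\}$, bound the number of parts of each order and sum to get $p \le R_m - 1 < gr_{k,\ell,m}$ (the strict inequality using $k \ge 3$ and the $5^{(k-2)/2}$ or $2\cdot 5^{(k-3)/2}$ factor), completing the contradiction.
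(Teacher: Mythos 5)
Your reduction to the case $r=0$ is the right observation, and you have correctly located the crux: the hypothesis already forces every part to have order at most $m-1$, so this lemma is exactly a sharpened, small-$m$ replacement for Lemma~\ref{Lemma:r=0}, and none of the inductive machinery on $k-\ell$ that you rehearse (base cases, $t\le 3$, admissibility of red and blue, the cases $r\ge 1$) is actually part of this lemma --- that machinery lives in Theorem~\ref{Thm:GR-Bm-Small}, which invokes the lemma. The problem is that the one step you defer, the ``refined counting argument'' in the $r=0$ situation, is the entire content of the lemma, and the counting you sketch does not close the gap. For $m=5$, writing $a,b,c,d$ for the numbers of parts of order $4,3,2,1$, the bounds ``at most $R_{\lceil m/i\rceil}-1$ parts of order at least $i$'' give $a\le 9$, $a+b\le 9$, $a+b+c\le 13$, $a+b+c+d\le 20$, and the maximum of $4a+3b+2c+d$ subject to these is $51=R_5'$ (attained at $a=9$, $c=4$, $d=7$), not the $36$ you must reach to contradict $p\ge 37$. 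In particular your claim that the total is ``maximized by taking all parts of size $m-1$'' is false: the all-order-$4$ configuration gives only $4\cdot 9=36$, and it is the mixed configurations that produce the excess. Likewise for $m=4$ the naive count gives $R_4'=35$ against a target of $31$. (Only for $m\in\{2,3\}$ does $R_m'$ already fall below $gr_{k,\ell,m}$, so only there is the lemma a consequence of Lemma~\ref{Lemma:r=0}.)

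Closing this gap for $m=4$ and $m=5$ is not a ``short case check on $m$'': it requires genuine structural analysis of the reduced graph, which is missing from your proposal. The paper's proof locates a monochromatic triangle or $B_2$ among the large parts and then establishes a sequence of local claims --- at most $4$ vertices can send all blue edges to a blue-joined pair of parts of order at least $3$; at most two further parts can send blue edges to any one part of a blue triangle on parts of order $4$; quantitative bounds, via $R(B_1,B_2)=7$ and $R(B_2,B_2)=10$, on how many parts can see a fixed pair of large parts in mixed colors --- and then runs a case analysis on the number of parts of order $4$ (for $m=5$) to force many vertices into a set whose parts are pairwise joined in a single color, which yields $B_m$ by a pigeonhole fact (Fact~\ref{Fact:MonoChi}). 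None of this is present or correctly foreshadowed in your write-up, so the proof as proposed has a genuine gap precisely at its load-bearing step.
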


\begin{proof}

For the values of $m$ in question, we have
$$
p \geq \begin{cases}
18 & \text{ if $m = 2$,}\\
25 & \text{ if $m = 2$,}\\
32 & \text{ if $m = 4$, and}\\
37 & \text{ if $m = 5$.}\\
\end{cases}
$$
Let $t$ be the number of parts in the partition.

When $m = 2$, all parts of the assumed G-partition have order $1$, meaning that $G$ is simply a $2$-coloring. Since $|G| = p > 10 = R_{2}$, the claim is immediate. We consider cases for the remaining values of $m$.

\setcounter{case}{0}
\begin{case}
$m = 3$.
\end{case}

With $p \geq 25$ vertices and all parts of the partition having order at most $2$, there must be at least $13$ parts in this partition. If there were at least $R_{3} = 14$ parts in the partition, then there would be a monochromatic $B_{3}$ in the reduced graph, which means we may assume there are exactly $13$ parts in the G-partition. In particular, this means that $12 > R_{2}$ parts have order $2$, so the reduced graph on this set of parts of order $2$ contains a monochromatic copy of $B_{2}$. The parts corresponding to this monochromatic copy of $B_{2}$ induce a monochromatic $B_{3}$ (in fact, $B_{4}$) in $G$, as desired.

\begin{case}
$m = 4$.
\end{case}

With $p \geq 32$ vertices and all parts of the partition having order at most $3$, there must be at least $11$ parts in this partition. On the other hand, since the reduced graph of this partition is a $2$-colored copy of $K_{t}$ and $R_{4} = 18$, we see that $t \leq 17$. If there is a copy of $B_{2}$ in the reduced graph of the parts of order at least $2$, this would produce a monochromatic copy of $B_{4}$ in $G$. This means that at most $R_{2} - 1 = 9$ parts can have have order at least $2$. Since at most $3\cdot 9 = 27$ vertices can be in parts of order at least $2$, there must be at least $5$ parts of order $1$, making a total of at least $14$ parts, so $14 \leq t \leq 17$.

If there were at most $5$ parts of order $3$, then since there can be at most $9$ parts of order at least $2$, we see that $p \leq 5\cdot 3 + 4\cdot 2 + 8 = 31$, a contradiction. Thus, there are at least $6$ parts of order $3$, meaning that there is a monochromatic triangle within the reduced graph restricted to the vertices corresponding to parts of order $3$, say corresponding to parts $H_{1}, H_{2}, H_{3}$ each of order $3$ with blue edges.

If any vertex in $G' = G \setminus (H_{1} \cup H_{2} \cup H_{3})$ has blue edges to two sets $H_{i}$ and $H_{j}$, then there is a blue $B_{4}$, meaning that every vertex in $G'$ has blue edges to at most one of the sets $H_{i}$. There is therefore a set of at least
$$
\ceil{ \frac{ |G'| }{3} } = \ceil{ \frac{32 - 9}{3} } = 8
$$
vertices $G^{\pp} \subseteq G'$ with all red edges to two parts, say $H_{1}$ and $H_{2}$. See Figure~\ref{Fig:Gpp} where the darker edges are blue and the lighter edges are red. Within $G^{\pp}$, any red edge would produce a red $B_{4}$ (actually $B_{6}$), meaning that all edges between the parts of the G-partition in these vertices must be blue. By Lemma~\ref{Lemma:1admissible} there are at most $10$ such vertices so there exist two vertices $x, y \in G \setminus (H_{1} \cup H_{2} \cup H_{3} \cup G^{\pp})$. In fact, it is easy to see that there must be exactly $3$ parts of the G-partition within $G^{\pp}$, meaning that $|G^{\pp}| \leq 9$ and at least two parts have order $3$ with one having order either $2$ or $3$.

\begin{figure}[H]
\begin{center}
\includegraphics{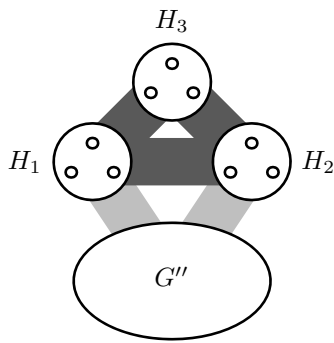} 
\end{center}
\caption{The structure of $G^{\pp}$ \label{Fig:Gpp}}
\end{figure}

As observed previously, $x$ and $y$ must each have red edges to at least one of $H_{1}$ or $H_{2}$. If either $x$ or $y$ has at least $4$ red edges to $G^{\pp}$, then there is a red $B_{4}$ using, for example, $x$, a vertex of $H_{1}$, and $4$ vertices from $G^{\pp}$, so $x$ and $y$ must each have at most $3$ red edges to $G^{\pp}$. On the other hand, if $x$ or $y$ has all blue edges to $G^{\pp}$, it is easy to see there is a blue $B_{4}$ within say $G^{\pp} \cup \{x\}$, so $x$ and $y$ must each have red edges to at least one vertex of $G^{\pp}$. Since $G^{\pp}$ consists of only $3$ parts of the G-partition and each has at least $2$ vertices, $x$ and $y$ must each have at least $2$ red edges to $G^{\pp}$.

If $x$ has red edges to a part of order $3$ in $G^{\pp}$, say $H_{4}$, then one vertex from each of the other two parts in $G^{\pp}$ form the spine of a $B_{4}$ with $x$ and $H_{4}$ as the pages. Thus, $x$ and $y$ must both have red edges to a part of order $2$ in $G^{\pp}$, say $H_{5}$. Then one vertex from each of the other two parts of $G^{\pp}$ form the spine of a blue copy of $B_{4}$ with $x$, $y$, and $H_{5}$ as the pages, completing the proof of the case $m = 4$.

\begin{case}
$m = 5$.
\end{case}
Note that $t\leq 20$ since $R(B_{5},B_{5})=21$. Let $a,b,c$ and $d$ be the number of parts of the G-partition of order $4,3,2$ and $1$ respectively. Let $H_{1},H_{2},\dots,H_{t}$ be the parts of this G-partition. We first present several structural claims before breaking the remainder of the proof into subcases based on the value of $a$.

\begin{claim}\label{Clm:5.1}
If $|H_{i}|\geq 3$, say for $i=1,2$, with all blue edges between $H_{1}$ and $H_{2}$, then there are at most $4$ other parts with all blue edges to $H_{1}\cup H_{2}$ and the total vertices in these parts is most $4$. If there are at least $2$ other parts with all red edges to $H_{1}\cup H_{2}$, then all edges between any pair of these parts must be blue.
\end{claim}

\begin{proof}
Since the edges between $H_{1}$ and $H_{2}$ are blue and each part has at least one vertex, to avoid creating a blue copy of $B_{5}$, there are most $4$ vertices with all blue edges to $H_{1}\cup H_{2}$ and so at most $4$ parts of the partition with blue edges to $H_{1} \cup H_{2}$. If there are at least $2$ parts, each with all red edges to $H_{1}\cup H_{2}$, to avoid creating a red copy of $B_{5}$, the color between any two such parts must be blue.
\end{proof}

\begin{claim}\label{Clm:5.2}
If $|H_{i}| = 4$ for $1 \leq i \leq 3$ and the corresponding reduced graph of these parts is a monochromatic triangle, say in blue, then there are at most two other parts with blue edges to $H_{i}$ for each $i$.
\end{claim}

\begin{proof}
Suppose there are three parts with blue edges to some part $H_{i}$ with $i \leq 3$, say $i = 1$ and let $H_{4}, H_{5}$, and $H_{6}$ be these parts. To avoid creating a blue copy of $B_{5}$, all edges from $H_{4} \cup H_{5} \cup H_{6}$ to $H_{2} \cup H_{3}$ must be red. Any red edge between the parts in $\{ H_{4}, H_{5}, H_{6}\}$ would create a red copy of $B_{5}$ so all edges between these parts must be blue. Then a blue spine edge $uv$ between $H_{4}$ and $H_{5}$ along with all blue edges from $\{ u, v\}$ to $H_{1} \cup H_{6}$ provides a monochromatic (blue) copy of $B_{5}$, for a contradiction.
\end{proof}

\begin{claim}\label{Clm:5.3}
If $|H_{1}|=|H_{2}|=4$, say with all blue edges between $H_{1}$ and $H_{2}$, then there are at most $5$ other parts with different colors to $H_{1}\cup H_{2}$.
\end{claim}

\begin{proof}
Suppose there are at least $6$ parts with different colors to $H_{1} \cup H_{2}$, and let $H_{i}$ for $3 \leq i \leq 8$ be these parts. Since $R(K_{3},K_{3})=6$, the reduced graph of these six parts contains a monochromatic triangle $T$ corresponding to three parts, say $H_{3}, H_{4}$ and $H_{5}$. At least two of the parts in $\{ H_{3}, H_{4}, H_{5}\}$ must have red edges to one of $H_{1}$ or $H_{2}$, and at least two of the parts in $\{ H_{3}, H_{4}, H_{5}\}$ must have blue edges to $H_{1}$ or $H_{2}$. Regardless whether $T$ is red or blue, there will be a monochromatic copy of $B_{5}$, completing the proof of Claim~\ref{Clm:5.3}. For example, if the triangle is red, suppose $H_{3}$ and $H_{4}$ have red edges to $H_{1}$. Then a red spine edge $uv$ from $H_{3}$ to $H_{4}$ along with all red edges to $H_{5} \cup H_{1}$ produces the desired red copy of $B_{5}$.
\end{proof}

\begin{claim}\label{Clm:5.4}
If $|H_{1}|=4$ and $|H_{2}|=3$ for two parts of the partition, then regardless of the color on edges between $H_{1}$ and $H_{2}$, there are at most $6$ parts with all blue (or red) edges to $H_{1}$ and all red (respectively blue) edges to $H_{2}$, and there are a total of at most $9$ parts with different colors to $H_{1}\cup H_{2}$.
\end{claim}

\begin{proof}
First suppose there are at least $7$ parts with all blue edges to $H_{1}$ and all red edges to $H_{2}$. By Fact~\ref{Fact:B1B2}, there is either a blue triangle (a copy of $B_{1}$) or a red copy of $B_{2}$ in the reduced graph restricted to these $7$ parts. Either of these, along with all edges to $H_{1}$ or $H_{2}$ (respectively) would produce a monochromatic copy of $B_{5}$, for a contradiction. There can therefore be at most $6$ parts with all blue (or red) edges to $H_{1}$ and all red (respectively blue) edges to $H_{2}$.

Now suppose there are at least $10$ parts, each with different colors on edges to $H_{1}$ and $H_{2}$. By Theorem~\ref{Thm:RamseyBm}, there is a monochromatic copy of $B_{2}$ within the reduced graph induced on these $10$ parts. Without loss of generality, suppose this copy of $B_{2}$ is blue, let the spine be the edges between $H_{3}$ and $H_{4}$ and let the remaining vertices of the copy of $B_{2}$ represent parts $H_{5}$ and $H_{6}$. If both $H_{3}$ and $H_{4}$ have blue edges to either $H_{1}$ or $H_{2}$, then there would be a blue copy of $B_{5}$ for a contradiction, so this cannot be the case. Without loss of generality, suppose $H_{3}$ (respectively $H_{4}$) has blue (respectively red) edges to $H_{1}$ and red (respectively blue) edges to $H_{2}$. If $H_{5}$ or $H_{6}$ has blue edges to $H_{1}$, then the blue edges between $H_{3}$ and $H_{5}$ or $H_{6}$ would serve as the spine of a blue copy of $B_{5}$, for a contradiction. This means that all edges between $H_{5} \cup H_{6}$ and $H_{1}$ are red so all edges between $H_{5} \cup H_{6}$ and $H_{2}$ are blue. If the edges between $H_{5}$ and $H_{6}$ were blue, these would serve as a spine for a blue copy of $B_{5}$, meaning that the edges between $H_{5}$ and $H_{6}$ must be red.

We now turn our attention to the remaining $10 - 4 = 6$ parts, say these are $\{H_{7}, H_{8}, \dots, H_{12}\}$. If any of these parts, say $H_{7}$, has blue edges to $H_{4}$, then $H_{7}$ must have red edges to both $H_{5}$ and $H_{6}$ to avoid making a blue copy of $B_{5}$. But this makes a red copy of $B_{5}$ with the spine being an edge between $H_{5}$ and $H_{6}$, meaning that all edges from $H_{4}$ to $H_{7} \cup \dots \cup H_{12}$ must be red. By the first part of this proof, there are at most $6$ parts with blue edges to $H_{1}$ so one of $\{H_{7}, H_{8}, \dots, H_{12}\}$ must have red edges to $H_{1}$, suppose $H_{7}$.

If $H_{7}$ has a red edge to any part in $\{H_{8}, \dots, H_{12}\}$, say $H_{8}$, then a red edge from $H_{7}$ to $H_{4}$ along with all red edges to $H_{1} \cup H_{8}$ produces a red copy of $B_{5}$, meaning that all edges from $H_{7}$ to $H_{8} \cup \dots \cup H_{12}$ must be blue. If there was another set, say $H_{8}$, with red edges to $H_{1}$, then the same argument shows that all edges from $H_{8}$ to $H_{9} \cup \dots \cup H_{12}$ must be blue. Then a blue edge from $H_{7}$ to $H_{8}$ would form the spine of a copy of $B_{5}$ using all blue edges to $H_{9} \cup H_{10} \cup H_{2}$, meaning that all edges from $H_{1}$ to $H_{8} \cup \dots \cup H_{12}$ must be blue so all edges from $H_{2}$ to these parts must be red. Any blue edge between parts in $\{H_{8}, \dots, H_{12}\}$ would form the spine of a $B_{5}$ with blue edges to $H_{1} \cup H_{7}$, so all edges between these parts must be red. Then these edges along with all red edges to $H_{2} \cup H_{4}$ easily produce a red copy of $B_{5}$, completing the proof of Claim~\ref{Clm:5.4}.
\end{proof}

Recall the definition of $a, b, c$, and $d$ to be the number of parts of the G-partition of order $4, 3, 2$, and $1$ respectively. 

\begin{claim}\label{Clm:5.7}
$a+b\leq 9$ and $a+b+c\leq 12$.
\end{claim}

\begin{proof}
Since $R(B_{2},B_{2})=10$, the first bound is immediate. Since $R(B_{3},B_{3})=14$, we immediately see that $a + b + c \leq 13$ but if $a + b + c = 13$, we may consider the reduced graph on the corresponding parts along with one part of order $1$. This reduced graph contains a monochromatic copy of $B_{3}$. Regardless of if and where the part of order $1$ appears as a vertex in this copy of $B_{3}$, since all other parts have order at least $2$, this produces a monochromatic copy of $B_{5}$ in $G$.
\end{proof}

Since $|G| \geq 37$, it must be the case that $4a + 3b + 2c + d \geq 37$. Recall that $a + b + c + d = t \leq 20$. First an easy fact that will help to simplify the cases.

\begin{fact}\label{Fact:MonoChi}
If all edges between parts of a G-partitioned complete graph $K_{n}$ have one color, there are at least $4$ parts, and $n \geq 9$, then this contains a monochromatic copy of $B_{5}$. Note that $8$ vertices does not suffice for this result since $4$ parts, each of order $2$, does not force a monochromatic copy of $B_{5}$.
\end{fact}

The remainder of the proof is broken into cases based on the value of $a$.

\begin{subcase}
$a=9$.
\end{subcase}
If $a=9$, then since $n = 37$, we have $b=c=0$ and $d=1$. Since $R(B_{2},B_{2})=10$, there must be a monochromatic copy of $B_{2}$ in the reduced graph. At least three of the parts used in this copy of $B_{2}$ have order $4$, so this forms a monochromatic copy of $B_{5}$.

\begin{subcase}
$6\leq a\leq 8$.
\end{subcase}
As $R(K_{3},K_{3})=6$, there must be a monochromatic triangle in the reduced graph induced on the parts of order $4$, say in blue. Call the corresponding parts $H_{1}$, $H_{2}$, and $H_{3}$. By Claim~\ref{Clm:5.2}, there are at most two other parts with blue edges to $H_{i}$ for each $i$. If these parts exist, name parts $H_{4}$ and $H_{5}$ with blue edges to $H_{1}$ and red edges to $H_{2} \cup H_{3}$, and name parts $H_{6}$ and $H_{7}$ with blue edges to $H_{2}$ and with red edges to $H_{1} \cup H_{3}$. By Claim~\ref{Clm:5.1}, there is no part with blue edges to all of $H_{1} \cup H_{2}$ except $H_{3}$. 

If $a=8$, then since $a+b\leq 9$, we have $b\leq 1$. If $b=0$, the total number of vertices in $\{H_{1},H_{2},H_{3},H_{4},H_{5},H_{6},H_{7}\}$ is at most $28$, there must be at least $9$ vertices with red edges to $H_{1}\cup H_{2}$ and the corresponding number of parts must be at least $5$. By Claim~\ref{Clm:5.1}, all edges between these parts must be blue. By Fact~\ref{Fact:MonoChi}, this set contains a blue copy of $B_{5}$, a contradiction. If $b=1$, then since there are only two vertices remaining in $G$, we have either $c=0$ and $d=2$ or $c=1$ and $d=0$. For the first situation, there are at least $9$ vertices with red edges to $H_{1}\cup H_{2}$ and the corresponding number of parts must be at least $4$. By Claim~\ref{Clm:5.1}, all edges between these parts are blue so by Fact~\ref{Fact:MonoChi}, this set contains a blue $B_{5}$, a contradiction. For the second situation, since $a=8$, $b=1$, $c=1$ and $R(B_{2},B_{2})=10$, there must be a monochromatic $B_{2}$ with at most one part of order $2$ and the other three parts having order at least $3$, yielding a monochromatic copy of $B_{5}$, a contradiction.

If $a=7$, then since $a+b\leq 9$, we have $b\leq 2$. The total number of vertices in $\{H_{1},H_{2},H_{3},H_{4},H_{5},H_{6},H_{7}\}$ is at most $28$, there are at least $9$ vertices with red edges to $H_{1}\cup H_{2}$ and the corresponding number of parts must be at least $4$. By Claim~\ref{Clm:5.1}, all edges between these parts must be blue. By Fact~\ref{Fact:MonoChi}, this set contains a blue copy of $B_{5}$, a contradiction. Similarly if $a=6$, then since $a+b\leq 9$, we have $b\leq 3$. The total number of vertices in $\{H_{1},H_{2},H_{3},H_{4},H_{5},H_{6},H_{7}\}$ is at most $27$, there are at least $10$ vertices with red edges to $H_{1}\cup H_{2}$ and the corresponding number of parts must be at least $4$. By Claim~\ref{Clm:5.1}, all edges between these parts must be blue so by Fact~\ref{Fact:MonoChi}, this set contains a blue copy of $B_{5}$, a contradiction.

\begin{subcase}
$2\leq a\leq 5$.
\end{subcase}
If $2\leq a\leq 5$, we choose two parts of order $4$, say $H_{1}$ and $H_{2}$ with blue edges between them. We consider cases based on the number of parts with blue edges to $H_{1} \cup H_{2}$.

First suppose there is no part with blue edges to $H_{1}\cup H_{2}$. By Claim~\ref{Clm:5.3}, there are at most $5$ parts with different colors to $H_{1}\cup H_{2}$, say $H_{3},H_{4},H_{5},H_{6},H_{7}$. The total number of vertices in $\{H_{1}, H_{2}, \dots, H_{7}\}$ is at most $26$, there are at least $11$ vertices with red edges to $H_{1}\cup H_{2}$. By Claim~\ref{Clm:5.1} and Fact~\ref{Fact:MonoChi}, there is a blue copy of $B_{5}$, for a contradiction.

Next suppose there is exactly one part with blue edges to $H_{1}\cup H_{2}$, say $H_{3}$. We first claim that there are at most $2$ parts with blue edges to $H_{1}$ and red edges to $H_{2}$, say $H_{4}$ and $H_{5}$. Indeed, if there was another such part, say $H'$, with blue edges to $H_{1}$ and red edges to $H_{2}$, then to avoid a blue copy of $B_{5}$, the edges from $H_{3}$ to $\{H_{4},H_{5},H'\}$ must all be red. To avoid a red copy of $B_{5}$, all edges between any two parts in $\{H_{4},H_{5},H'\}$ must be blue. Then a blue spine edge between $H_{4}$ and $H_{5}$ along with all blue edges to $H_{1} \cup H'$ creates a blue copy of $B_{5}$, for a contradiction meaning that $H'$ cannot exist. Similarly, there are most $2$ parts with red edges to $H_{1}$ and blue edges to $H_{2}$, say $H_{6}$ and $H_{7}$. The sum of vertices in $\{H_{1}, H_{2}, \dots, H_{7}\}$ is most $26$, so there are at least $11$ vertices with all red edges to $H_{1}\cup H_{2}$ and the corresponding number of parts must be at least $4$. By Claim~\ref{Clm:5.1} and Fact~\ref{Fact:MonoChi}, there is a blue copy of $B_{5}$, a contradiction.

Next suppose there are $t$ parts, where $2 \leq t \leq 4$, each with all blue edges to $H_{1}\cup H_{2}$, say $H_{3}$, $H_{4}$, $H_{5}$, and $H_{6}$. By Claim~\ref{Clm:5.1}, all edges between parts in $\{H_{3}, H_{4}, H_{5}, H_{6}\}$ are red and to avoid a blue copy of $B_{5}$, there are at most $4$ total vertices in $H_{3} \cup H_{4} \cup H_{5} \cup H_{6}$. We claim that there are at most $2$ parts with blue edges to $H_{1}$ and red edges to $H_{1}$, say $H_{7}$ and $H_{8}$, and the total number of vertices in $H_{7}\cup H_{8}$ is at most $4$. Supposing there is a third such part, $H'$, in order to avoid a blue copy of $B_{5}$, all edges between $H_{7} \cup H_{8} \cup H'$ and $H_{2} \cup H_{3} \cup \dots \cup H_{6}$ must be red. To avoid a red copy of $B_{5}$, all edges between the parts in $\{H_{5}, H_{6}, H'\}$ must be blue, yielding a blue copy of $B_{5}$ with $H_{1}$, a contradiction meaning that $H'$ cannot exist. Since all edges between $H_{3}$ and $H_{4}$ are red, there can be at most $4$ vertices in $H_{7} \cup H_{8}$ to avoid creating a red copy of $B_{5}$. Similarly, there can be at most $2$ parts with red edges to $H_{1}$ and blue edges to $H_{1}$, say $H_{9}$ and $H_{10}$, and the total number of vertices in $H_{7}\cup H_{8}$ is at most $4$. The sum of vertices in $\{H_{1}, H_{2}, \dots, H_{10}\}$ is at most $20$ so there are at least $17$ vertices with all red edges to $H_{1}\cup H_{2}$ and the corresponding number of parts must be at least $4$. By Claim~\ref{Clm:5.1} and Fact~\ref{Fact:MonoChi}, there is a blue copy of $B_{5}$, a contradiction.

\begin{subcase}
$a=1$.
\end{subcase}

With $a = 1$, since $n = 37$, $t \leq 20$, and $a + b + c \leq 12$, we must have $b \geq 3$. Choose two parts, say $H_{1}$ and $H_{2}$, with order of $4$ and $3$ respectively, and suppose all edges between them are blue.

First suppose there is no part with blue edges to $H_{1}\cup H_{2}$. Let $C$ denote the set of parts with blue edges to $H_{1}$ and red edges to $H_{2}$, and let $D$ denote the set of parts with red edges to $H_{1}$ and blue edges to $H_{2}$. By Claim~\ref{Clm:5.4}, we see that $|C|\leq 6$, $|D|\leq 6$, and $|C \cup D|\leq 9$. Let $C \cup D = \{H_{3}, \dots, H_{11}\}$ be this set of parts and note that some of these parts may not exist. We first show that there are at least $4$ parts of order $1$ within $C \cup D$. Since $R(K_{3},K_{3})=6$, for any six parts in $C \cup D$, say $\{H_{3}, \dots, H_{8}\}$, there is a monochromatic triangle, say with corresponding parts $H_{3}$, $H_{4}$ and $H_{5}$. By the pigeonhole principle, there are at least two parts with the same color to $H_{1}\cup H_{2}$, say $H_{4}$ and $H_{5}$ with blue edges to $H_{1}$ and red edges to $H_{2}$, and $H_{3}$ with red edges to $H_{1}$ and blue edges to $H_{2}$. To avoid a blue copy of $B_{5}$, the triangle must be red, and if $H_{3}$ has order at least $2$, the parts $\{H_{3},H_{4},H_{5}\}$ along with $H_{2}$, yields a red $B_{5}$. Thus, $H_{3}$ must have order $1$. Then repeat this argument to find a monochromatic triangle from the reduced graph induced on $(C \cup D) \setminus \{H_{3}\}$, and so on. There are therefore at least $4$ parts of order $1$ within $C \cup D$. Since $a+b\leq 9$ and $a=1$, $b\leq 8$, the total number of vertices of $\{H_{1}, \dots, H_{11}\}$ is at most $26$, so there are at least $11$ vertices remaining with red edges to $H_{1}\cup H_{2}$ and the corresponding number of parts is at least $4$. By Claim~\ref{Clm:5.1} and Fact~\ref{Fact:MonoChi}, there is a blue copy of $B_{5}$, a contradiction.

Next suppose there is only one part with all blue edges to $H_{1} \cup H_{2}$, say $H_{3}$. First an easy observation.
\begin{fact}\label{Fact:3.4.2}
There is no part (other than $H_{1}$) with blue edges to both $H_{2}$ and $H_{3}$. There is at most one vertex (other than $H_{2}$) with blue edges to both $H_{1}$ and $H_{3}$.
\end{fact}
We first show that there are at most three parts, say $\mathcal{A}$, with red edges to $H_{1}$ and blue edges to $H_{2}$, and additionally that there are at most $6$ vertices in these parts. For a contradiction, suppose $\mathcal{A} = \{H_{4}, H_{5}, H_{6}, H_{7}\}$. By Fact~\ref{Fact:3.4.2}, all edges from $H_{3}$ to $\mathcal{A}$ must be red. This means there can be no red edges between parts in $\mathcal{A}$ so the reduced graph induced on these parts is a blue $K_{4}$. Since all edges from $H_{2}$ to this set are blue, this yields a blue copy of $B_{5}$, meaning that $H_{7}$ cannot exist. Furthermore, if any of the parts in $\{H_{4}, H_{5}, H_{6}\}$ has order at least $2$, there would again be a blue copy of $B_{5}$, meaning that there can be at most $6$ vertices with red edges to $H_{1}$ and blue edges to $H_{2}$. Note that if there are $6$ such vertices, they must occur in only two parts. We next show that there are at most $5$ parts, say $\mathcal{B}$, with blue edges to $H_{1}$ and red edges to $H_{2}$, for a total of at most $15$ vertices in these parts. Indeed, if there were $6$ such parts, then the reduced graph induced on these parts would contain a monochromatic triangle which, along with appropriately colored edges to $H_{1}, H_{2}$, and / or $H_{3}$, produces a monochromatic copy of $B_{5}$.
If there were at least $9$ vertices remaining outside $H_{1} \cup H_{2} \cup H_{3} \cup V(\mathcal{A}) \cup V(\mathcal{B})$, these vertices must have all red edges to $H_{1} \cup H_{2}$ and so, by Fact~\ref{Fact:MonoChi}, the proof would be complete. Note that $H_{3}$ can have blue edges to at most one vertex of $\mathcal{B}$, say $v$, to avoid creating a blue copy of $B_{5}$. Thus, if $|H_{3}| \geq 2$, then there can be no red edge within $\mathcal{B} \setminus v$, meaning that there can be at most $6$ vertices in $\mathcal{B} \setminus v$ (in two parts), so we get
$$
|H_{1} \cup H_{2} \cup H_{3} \cup V(\mathcal{A}) \cup V(\mathcal{B})| \leq 4 + 3 + 3 + 6 + 7 = 22
$$
so there are at least $15$ vertices remaining outside, completing the proof. This means that $|H_{3}| = 1$ so
$$
|H_{1} \cup H_{2} \cup H_{3} \cup V(\mathcal{A}) \cup V(\mathcal{B})| \leq 4 + 3 + 1 + 6 + 15 = 29
$$
so there are at least $8$ vertices remaining outside. Since $9$ such vertices would complete the proof, we may assume equality holds so $\mathcal{A}$ includes $2$ parts of order $3$ each and $\mathcal{B}$ includes $5$ parts of order $3$ each for which the reduced graph is the unique $2$-coloring of $K_{5}$ with no monochromatic triangle. See Figure~\ref{Fig:Sketch} where the dark edges represent all blue edges between the parts and the light edges represent all red edges between the parts. By Fact~\ref{Fact:3.4.2}, all edges from $H_{3}$ to $\mathcal{B}$ must be red. To avoid a red copy of $B_{5}$, a part $H_{4} \in \mathcal{A}$ can have red edges to at most two parts in $\mathcal{B}$, meaning that $H_{4}$ must have blue edges to at least $3$ parts in $\mathcal{B}$. These edges must therefore form a blue triangle. This triangle along with blue edges from $\mathcal{B}$ to $H_{1}$, produces a blue copy of $B_{5}$.

\begin{figure}
\begin{center}
\includegraphics[width=2in]{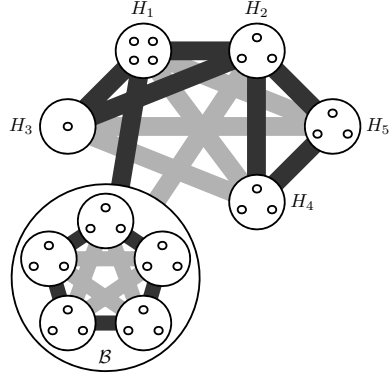}
\caption{The structure of the coloring \label{Fig:Sketch}}
\end{center}
\end{figure}

Next suppose there are at least two parts with blue edges to $H_{1} \cup H_{2}$, say $\mathcal{C}$. By Claim~\ref{Clm:5.1}, there are at most $4$ vertices in $\mathcal{C}$ and all edges between parts in $\mathcal{C}$ must be red to avoid a blue copy of $B_{5}$. As in the previous argument, there can be at most $3$ parts, say $\mathcal{A}$, with red edges to $H_{1}$ and blue edges to $H_{2}$ and a total of at most $6$ vertices in $\mathcal{A}$. For a contradiction, suppose there are at least $11$ vertices with red edges to $H_{2}$ and blue edges to $H_{1}$, say $\mathcal{B}$. By Fact~\ref{Fact:3.4.2}, each part in $\mathcal{C}$ can have blue edges to at most one vertex in $\mathcal{B}$. This means there are at least $7$ vertices and so at least $3$ parts in $\mathcal{B}$ with all red edges to $\mathcal{C}$. Any red edge between these parts would produce a red copy of $B_{5}$ using red edges to $H_{2} \cup \mathcal{C}$ so all edges between these parts must be blue. This produces a blue copy of $B_{5}$ using blue edges to $H_{1}$, meaning that $\mathcal{B}$ contains at most $10$ vertices. In total, we have
$$
|H_{1} \cup H_{2} \cup \mathcal{C} \cup V(\mathcal{A}) \cup V(\mathcal{B})| \leq 4 + 3 + 4 + 6 + 10 = 25
$$
so there are at least $12$ vertices remaining outside. These vertices have all red edges to $H_{1} \cup H_{2}$ and so, by Fact~\ref{Fact:MonoChi}, the proof is complete.

\begin{subcase}
$a=0$.
\end{subcase}

We first claim that $b\geq 5$. If $b\leq 4$ and by Claim~\ref{Clm:5.7}, we have $b+c\leq 12$, and since $3b+2c+d=37$, we get $d\geq 9$. This means $b+c+d\geq 21$, so since $R(B_{5},B_{5})=21$, there must be a monochromatic $B_{5}$, a contradiction.

Thus $b \geq 5$, say with corresponding parts $H_{1},\dots,H_{5}$ each having order $3$. By Claim~\ref{Clm:5.7} and the previous discussion, we must have $c \leq 7$.  Choose one part of order at least $2$, say $H_{6}$. Since $R(K_{3},K_{3})=6$, there must be a monochromatic triangle, say in blue, in the reduced graph induced on $H_{1},\dots,H_{6}$. Say the parts corresponding to this triangle are $H_{1},H_{2},H_{i}$ where $|H_{1}|=|H_{2}|=3$ and $2 \leq |H_{i}| \leq 3$. For the sake of simplicity, set $i = 3$. To avoid creating a blue copy of $B_{5}$, there can be at most $4 - |H_{3}|$ vertices (other than $H_{3}$), say $\mathcal{C}$, with all blue edges to $H_{1} \cup H_{2}$. If there were $5$ parts, say $H_{4}, H_{5}, H_{6}, H_{7}$, and $H_{8}$, with blue edges to $H_{1}$ and red edges to $H_{2}$, then to avoid creating a blue copy of $B_{5}$, at most one of these parts (really at most one vertex) can have blue edges to $H_{3}$, say $H_{4}$. Then all edges from $H_{2} \cup H_{3}$ to $\mathcal{A} = H_{5} \cup \dots \cup H_{8}$ must be red, meaning that there are no red edges within $\mathcal{A}$. With these $4$ parts and all blue edges to $H_{1}$, there is a blue copy of $B_{5}$, a contradiction. This means there can be at most $3$ parts in $\mathcal{A}$. In fact, if any of these parts has order at least $2$, there can be at most $2$ such parts and so $|\mathcal{A}| \leq 6$ and so a total of at most $9$ vertices with blue edges to $H_{1}$ and red edges to $H_{2}$. Similarly there can be at most $9$ vertices with red edges to $H_{1}$ and blue edges to $H_{2}$. This means that
$$
|H_{1} \cup H_{2}| + |H_{3} \cup \mathcal{C}| + |\mathcal{A}| + |\mathcal{B}| \leq 6 + 4 + 9 + 9 = 28.
$$
By Fact~\ref{Fact:MonoChi}, since there are at least $9$ vertices remaining with all red edges to $H_{1} \cup H_{2}$, the proof is complete. 
\end{proof}

Now the main result of this section, the sharp Gallai-Ramsey numbers for several small books.

\begin{theorem}\label{Thm:GR-Bm-Small}
Given integers $k \geq 1$ and $2 \leq m \leq 5$,
$$
gr_{k}(K_{3} : B_{m}) = \begin{cases}
m + 2 & \text{if $k = 1$},\\
(R_{m} - 1) \cdot 5^{(k - 2)/2} + 1 & \text{if $k$ is even,}\\
2 \cdot (R_{m} - 1) \cdot 5^{(k - 3)/2} + 1 & \text{if $k \geq 3$ is odd.}
\end{cases}
$$
\end{theorem}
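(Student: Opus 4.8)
The statement combines a lower bound and a matching upper bound, so I would treat these separately. For $k = 1$ the equality is immediate: since $B_{m}$ has $m + 2$ vertices, a monochromatic $K_{m+1}$ contains no $B_{m}$, while a monochromatic $K_{m+2}$ is a copy of $B_{m}$. For $k \geq 2$, the lower bound is precisely Theorem~\ref{Thm:GR-Bm-LowBd}, whose value coincides with the claimed formula because $R_{m} = R(B_{m}, B_{m})$. So the real task is the upper bound $gr_{k}(K_{3} : B_{m}) \leq gr_{k, 0, m}$ for $k \geq 2$ and $2 \leq m \leq 5$, where from now on $gr_{k, \ell, m}$ denotes the quantity defined in this section (with $R_{m} - 1$ in place of $R_{m}'$).

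For the upper bound I would prove the stronger inequality $gr_{k, \ell}(K_{3} : B_{m}) \leq gr_{k, \ell, m}$ by induction on $k - \ell$ and then set $\ell = 0$. The argument is the proof of Theorem~\ref{Thm:MainBm} run essentially verbatim, with a single structural change: every appeal to Lemma~\ref{Lemma:r=0} is replaced by the sharp Lemma~\ref{Lemma:Smallr=0}. In particular the case $r = 0$ (all parts of a minimum G-partition having order at most $m - 1$) is now closed by Lemma~\ref{Lemma:Smallr=0}, as is the step that previously ruled out an $m$-inadmissible color in the G-partition via the bound $R_{m}'$. As the authors point out, the cases $r = 1, 2, 3$ and $r \geq 4$ of that proof were written to be reused without change: each reduces only to strictly smaller instances, via $gr_{k - 1, \ell + 1, m}$, $gr_{k - 2, \ell, m}$, and $gr_{k - 1, \ell, m}$, together with the additive constants $4m - 6$, $6m - 6$, $m - 1$ and Fact~\ref{Fact:4mMonochi}. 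Hence I would only need to re-verify the numerical inequalities for the new, smaller values of $gr_{k, \ell, m}$ and to check the base of the induction.

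The arithmetic should in fact be easier, since passing from $R_{m}'$ to $R_{m} - 1$ scales the target and all recursive quantities by the same factor; for example in the case $r \geq 4$ one has $gr_{k, \ell, m} - 5[gr_{k - 2, \ell, m} - 1] = 1 + 4\ell(m - 1) > 0$, and in the cases $r = 1, 2, 3$ the inequalities derived in the proof of Theorem~\ref{Thm:MainBm} again force $k \leq 2$ once one uses $R_{m} \geq 4m - 1$ for $m \leq 5$ (immediate from Theorem~\ref{Thm:RamseyBm}). For the base: $k = 1$ is immediate as above; for $k = 2$ one has $gr_{2, \ell, m} = R_{m} - \ell(m - 1)$, so $\ell = 0$ is the definition of $R(B_{m}, B_{m})$, $\ell = 1$ follows from the $k = 2$ case of Lemma~\ref{Lemma:1admissible} together with $R_{m} - (m - 1) \geq 3m$, and $\ell = 2$ is vacuous because no $2$-coloring of $K_{n}$ with $n > 2m - 1$ can have both colors $m$-inadmissible while $R_{m} - 2(m - 1) > 2m - 1$. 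The cases $k - \ell = 0$ and $k - \ell = 1$ are handled exactly as in Theorem~\ref{Thm:MainBm}, using Theorem~\ref{Thm:Stars} and Lemma~\ref{Lemma:1admissible} respectively, after confirming that $gr_{k, \ell, m}$ still exceeds the relevant thresholds for $m \leq 5$. Finally, although Fact~\ref{Fact:4mMonochi} is stated for $m \geq 3$, it holds trivially for $m = 2$ (all parts are singletons, so the configuration is a monochromatic $K_{n'}$ with $n' \geq 4 = 3m - 2$, which contains $B_{2} = K_{4} - e$), so the argument covers $m = 2$ as well.

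The real obstacle is not this assembly but the ingredient it rests on: the sharp $r = 0$ bound, Lemma~\ref{Lemma:Smallr=0}, and inside it the case $m = 5$. Proving that case requires the structural Claims~\ref{Clm:5.1}, \ref{Clm:5.2}, \ref{Clm:5.3}, \ref{Clm:5.4}, and \ref{Clm:5.7}, which bound, for a fixed pair of large parts, how many other parts can realize each red/blue color pattern, followed by a long case split on the number $a$ of parts of order $4$. That analysis is exactly the price of sharpening the general bound of Theorem~\ref{Thm:MainBm} to an exact value; with Lemma~\ref{Lemma:Smallr=0} in hand, deducing Theorem~\ref{Thm:GR-Bm-Small} is the routine bookkeeping described above.
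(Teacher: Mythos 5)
Your proposal matches the paper's proof essentially step for step: the lower bound from Theorem~\ref{Thm:GR-Bm-LowBd}, the upper bound by induction on $k-\ell$ applied to the refined quantity $gr_{k,\ell}(K_{3}:B_{m})$ (with the base cases $k=1,2$ and $k-\ell\in\{0,1\}$ handled via Theorem~\ref{Thm:Stars} and Lemma~\ref{Lemma:1admissible}), the case $r=0$ closed by Lemma~\ref{Lemma:Smallr=0}, and the cases $r=1,2,3,\geq 4$ reused verbatim from Theorem~\ref{Thm:MainBm}. Your observation that the step ruling out an $m$-inadmissible color between the parts should also invoke the sharp Lemma~\ref{Lemma:Smallr=0} rather than Lemma~\ref{Lemma:r=0} (since the exact value of $n$ can fall below $R_{m}'$ for small $k$) is, if anything, more careful than the paper's wording at that point.
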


\begin{proof}
The lower bound follows from Theorem~\ref{Thm:GR-Bm-LowBd}. For the upper bound, let $\ell = \ell_{m}(G)$ be the number of colors that are $m$-inadmissible in a colored complete graph $G$. We will prove the upper bound by induction on $k - \ell$. In fact, we prove the following claim, which immediately implies the theorem.

\begin{claim}\label{Claim:MainSmall}
Given integers $k, \ell, m$ with $k \geq 1$, $2 \leq m \leq 5$, and $0 \leq \ell \leq k$, we have
$$
gr_{k, \ell}(K_{3} : B_{m}) = gr_{k, \ell, m}.
$$
\end{claim}

If $k = 1$, then Claim~\ref{Claim:MainSmall}, and therefore Theorem~\ref{Thm:GR-Bm-Small}, are immediate so next suppose $k = 2$. Since $G$ is a $2$-colored complete graph, we show the following claim.

\begin{claim}\label{Claim:Smallk=2} 
For $2 \leq m \leq 5$, any $2$-coloring $G$ of $K_{n}$ where
$$
n = R_{m} - \ell(m - 1)
$$
with $\ell$ different $m$-inadmissible colors must contain a monochromatic copy of $B_{m}$.
\end{claim}

\begin{proof}
The claim is immediate when $\ell = 0$. When $\ell = k = 2$, there are no admissible colors so each vertex has at most $m - 1$ incident edges in each color. With $n > 2m - 1$ and only two available colors, this is a contradiction, so we may assume $\ell = 1$. Let red be the admissible color and let blue be the inadmissible color.

Choose a pair of vertices in $G$, say $u$ and $v$, with a red edge between them. Then each of $u$ and $v$ has at most $m - 1$ incident blue edges. Let $N_{b}(u)$ (and $N_{b}(v)$) denote the set of vertices with blue edges to $u$ (respectively $v$). Since
$$
|G \setminus \{u, v\}| = R_{m} - (m - 1) - 2 \geq (4m + 1) - (m + 1) \geq 3m,
$$
there must be at least $3m - 2(m - 1) = m + 2$ vertices in $G \setminus (\{u, v\} \cup N_{b}(u) \cup N_{b}(v))$. If we let $W$ be this set of vertices, then $\{u, v\} \cup W$ induces a red copy of $B_{m}$, as claimed.
\end{proof} 

By Claim~\ref{Claim:Smallk=2}, for the remainder of this proof, we may assume $k \geq 3$.

If $k - \ell = 0$, then there are no $m$-admissible colors. With
$$
n = \begin{cases}
17 & \text{ if $m = 2$,}\\
23 & \text{ if $m = 3$,}\\
29 & \text{ if $m = 4$, \text{ and}}\\
33 & \text{ if $m = 5$,}
\end{cases}
$$
by Theorem~\ref{Thm:Stars}, there is a vertex with at least $\frac{2n}{5} > m$ edges in a single color, so this color would be $m$-admissible, a contradiction.

Next suppose $k - \ell = 1$, and consider a G-partition. If there are only two parts, then either both parts have at least $m$ vertices or one part has fewer than $m$ vertices. If both parts have at least $m$ vertices, then neither part contains any edges in this color, reducing $k$ by $1$ and reducing the problem to the case $k - \ell = 0$. On the other hand, if one part has fewer than $m$ vertices, those vertices may be removed, yielding a graph with one more $m$-inadmissible color, again reducing the problem to the case $k - \ell = 0$. More generally, if there are more than two parts, none can have order at least $m$ since the only color with edges to a part of order at least $m$ must be $m$-admissible, yielding a bipartition as above. With all parts having order at most $m - 1$, but only one $m$-admissible color, Lemma~\ref{Lemma:1admissible} implies the existence of a monochromatic $B_{m}$ in the $m$-admissible color, to complete the case $k - \ell = 1$.

This means we may assume $k - \ell \geq 2$. There is a G-partition of $G$, say using colors red and blue. Consider such a G-partition with the smallest number of parts, say $t$. If $t \geq R_{m}$, then by Theorem~\ref{Thm:RamseyBm}, the reduced graph of this partition contains a monochromatic copy of $B_{m}$, a contradiction. We may therefore assume $t \leq R_{m} - 1$. Let $H_{1},H_{2},\dots,H_{t}$ be parts of such a G-partition.

First suppose $t \leq 3$. If $t = 3$, then the reduced graph is a triangle but this contains a bipartition with only one color on the edges between the parts so we may assume $t=2$. Let red be the color of the edges between the two parts. If $|H_{1}| \leq m - 1$, then red is $m$-inadmissible within $H_{2}$ so we remove $H_{1}$ from the graph and apply induction on $k - \ell$ within $H_{2}$. This means that $|H_{1}|, |H_{2}| \geq m$, but then there can be no red edges within either $H_{1}$ or $H_{2}$. This gives
\beqs
n & = & |H_{1}| + |H_{2}|\\
~ & \leq & 2 \left[ gr_{k - 1, \ell, m} - 1\right]\\
~ & < & gr_{k, \ell, m},
\eeqs
a contradiction. We may therefore assume $t \geq 4$ and, by the minimality of $t$, each part of the G-partition has incident edges in both colors that appear in the G-partition. Suppose red and blue are the two colors appearing in the G-partition.

If one of red or blue was $m$-inadmissible in $G$, then since both colors must induce a connected subgraph of the reduced graph, all parts must have order less than $m$, so by Lemma~\ref{Lemma:r=0}, we have $|G| \leq R_{m}'$.
This is a contradiction since $k \geq 3$, so red and blue must be $m$-admissible within $G$.

Let $r$ be the number of parts of the G-partition with order at least $m$, say with
$$
|H_{1}|, |H_{2}|, \dots, |H_{r}| \geq m ~ ~ ~ ~ \text{and} ~ ~ ~ ~ |H_{r + 1}|, |H_{r + 2}|, \dots, |H_{t}| \leq m - 1.
$$

If $r \geq 4$ and $t \geq 6$, then any choice of $6$ parts containing the $4$ parts $\mathscr{H} = \{H_{1}, H_{2}, H_{3}, H_{4}\}$ will contain a monochromatic triangle in the reduced graph. Such a triangle must contain at least one part from $\mathscr{H}$, meaning that the corresponding subgraph of $G$ must contain a monochromatic copy of $B_{m}$, a contradiction. Thus, we may assume either $4 \leq t \leq 5$ or $r \leq 3$.  The case $r = 0$ follows from Lemma~\ref{Lemma:Smallr=0}, and the remaining cases, for $r = 1$, $r = 2$, $r = 3$, and $r \geq 4$ follow from exactly the same argument as in the proof of Theorem~\ref{Thm:MainBm}. This completes the proof of this case and the proof of Claim~\ref{Claim:MainSmall} and therefore Theorem~\ref{Thm:GR-Bm-Small}.
\end{proof}

\section{Conclusion}\label{Sec:Con}

Putting Theorem~\ref{Thm:GR-Bm-LowBd} and Theorem~\ref{Thm:MainBm} together, we have obtained the following general bounds on the Gallai-Ramsey numbers for books.

\beqs
&&\begin{cases}
(4 + o(1))m \cdot 5^{(k - 2)/2} + 1 & \text{if $k$ is even,}\\
2 \cdot (4 + o(1))m \cdot 5^{(k - 3)/2} + 1 & \text{if $k$ is odd}
\end{cases}\\
& \leq & gr_{k}(K_{3} : B_{m}) \\
& \leq &\begin{cases}
m + 2 - \ell & \text{ if $k = 1$,}\\
(4 + o(1))m\ln [(4 + o(1))m] \cdot 5^{\frac{k-2}{2}}+1 - (m - 1)\ell & \text{ if $k$ is even,}\\
2 \cdot (4 + o(1))m\ln [(4 + o(1))m] \cdot 5^{\frac{k-3}{2}}+1 - (m - 1)\ell & \text{ if $k \geq 3$ is odd.}
\end{cases}
\eeqs

Naturally it would be very nice to sharpen these bounds. As in the small cases presented in Section~\ref{Sec:Small}, we believe the lower bound to be sharp and so we offer the following conjecture.

\begin{conjecture}
If $B_{m}$ is the book with $m$ pages, $B_{m} = K_{2} + \overline{K_{m}}$, then for $k \geq 2$,
$$
gr_{k}(K_{3} : B_{m}) = \begin{cases}
(R(B_{m}, B_{m}) - 1) \cdot 5^{(k - 2)/2} + 1 & \text{if $k$ is even,}\\
2 \cdot (R(B_{m}, B_{m}) - 1) \cdot 5^{(k - 3)/2} + 1 & \text{if $k$ is odd.}
\end{cases}
$$
\end{conjecture}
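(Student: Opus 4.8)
The plan is to take the lower bound for free from the construction in Theorem~\ref{Thm:GR-Bm-LowBd}, so that all the work goes into the matching upper bound. I would prove the upper bound by induction on the number of colors $k$, and, for each fixed $k \ge 3$, by an inner induction on $k - \ell$, where $\ell = \ell_m(G)$ is the number of $m$-inadmissible colors of the coloring $G$. To make the induction close, one must prove the sharper statement $gr_{k,\ell}(K_3 : B_m) = gr_{k,\ell,m}$ for every $0 \le \ell \le k$ (with $gr_{k,\ell,m}$ as defined just before Lemma~\ref{Lemma:Smallr=0}): deleting the at most $m-1$ neighbours that a spine endpoint has in an inadmissible colour is precisely the move that exchanges a drop in $k$, or a rise in $\ell$, for the loss of an $(m-1)$ term, so $\ell$ has to be carried as a bookkeeping parameter throughout. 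The base case $k = 1$ is immediate from the definition of a book, and $k = 2$ is a short Ramsey argument: for $\ell = 0$ it is $R(B_m, B_m)$; for $\ell = 2$ every vertex sees at most $m - 1$ edges of each colour, which contradicts the vertex count; and for $\ell = 1$ one picks a spine edge $uv$ in the admissible colour, deletes the at most $2(m-1)$ vertices joined to $\{u,v\}$ by the inadmissible colour, and finds $\ge m$ vertices completing a monochromatic $B_m$.

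For $k \ge 3$ I would run the same case skeleton as the proof of Theorem~\ref{Thm:MainBm}, but with every estimate replaced by its sharp counterpart. The case $k - \ell = 0$ uses Theorem~\ref{Thm:Stars}: the $\frac{2n}{5}$-star bound forces some colour to be $m$-admissible. The case $k - \ell = 1$ is settled by analysing a G-partition --- if it has two parts both of order $\ge m$ then the partition colour disappears inside each and we are back to $k - \ell = 0$; if a part has order $< m$ we delete it, gaining an inadmissible colour; otherwise all parts are small and Lemma~\ref{Lemma:1admissible} produces the monochromatic $B_m$. For $k - \ell \ge 2$ I would take a G-partition with the fewest parts $t$; Theorem~\ref{Thm:RamseyBm} gives $t \le R_m - 1$, the cases $t \le 3$ collapse to a bipartition which reduces the number of colours, and when $t \ge 4$ every part meets both partition colours, so both colours are $m$-admissible (else all parts are small and Lemma~\ref{Lemma:Smallr=0} finishes it). Writing $r$ for the number of parts of order $\ge m$, the subcases $r = 0, 1, 2, 3, \ge 4$ then go through as in the proof of Theorem~\ref{Thm:MainBm}, using Fact~\ref{Fact:4mMonochi} and the inductive bounds on $gr_{k-1,\ell+1,m}$, $gr_{k-2,\ell,m}$, and so on.

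The genuine obstacle --- the only place this argument does more than the general upper bound --- is the sharp ``all parts small'' lemma, Lemma~\ref{Lemma:Smallr=0}: a $k$-colouring with $k \ge 3$, all G-partition parts of order $\le m - 1$, and at least the stated number of vertices must contain a monochromatic $B_m$. For $m = 2, 3$ this is a one-line count from $R(B_2, B_2) = 10$ and $R(B_3, B_3) = 14$, but for $m = 4$ and especially $m = 5$ it requires a delicate case analysis on the numbers $a, b, c, d$ of parts of order $4, 3, 2, 1$, supported by structural claims (cf.\ Claims~\ref{Clm:5.1}--\ref{Clm:5.7}) that bound how many parts can exhibit a given colour pattern to a fixed pair of larger parts, together with repeated extractions of monochromatic triangles and copies of $B_2$ from reduced graphs via $R(K_3, K_3) = 6$, Fact~\ref{Fact:B1B2}, and the values in Theorem~\ref{Thm:RamseyBm}. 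I expect essentially all of the difficulty of the theorem to live inside that lemma, and in particular inside the $m = 5$, small-$a$ subcases where many near-extremal configurations must each be ruled out by hand.
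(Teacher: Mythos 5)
The statement you are proving is stated in the paper as a \emph{conjecture}, not a theorem: the paper establishes the matching lower bound for all $m$ (Theorem~\ref{Thm:GR-Bm-LowBd}), a weaker general upper bound with $R_m'$ in place of $R_m - 1$ (Theorem~\ref{Thm:MainBm}), and the sharp equality only for $2 \le m \le 5$ (Theorem~\ref{Thm:GR-Bm-Small}). Your outline is a faithful reconstruction of the paper's proof of Theorem~\ref{Thm:GR-Bm-Small}, but it does not prove the conjecture, because the one step you yourself flag as ``the genuine obstacle'' --- the sharp all-parts-small lemma, Lemma~\ref{Lemma:Smallr=0} --- is only available for $m \le 5$, and you give no argument for it beyond citing the paper's hand case analysis on $a,b,c,d$ and Claims~\ref{Clm:5.1}--\ref{Clm:5.7}, which are specific to $m=4,5$. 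For general $m$, the only bound the paper (or your proposal) has for a Gallai-partition with all parts of order at most $m-1$ is $R_m' = \sum_{i=1}^{m-1}[R_{\lceil m/i\rceil}-1] \sim 4m\ln(4m)$, which is asymptotically larger than the target $R_m - 1 \sim 4m$. Closing that logarithmic gap is precisely the open content of the conjecture, and nothing in your proposal addresses it.

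A secondary issue: you assert that the subcases $r=1,2,3,\ge 4$ ``go through as in the proof of Theorem~\ref{Thm:MainBm}'' once $gr_{k,\ell,m}$ is redefined with $R_m - 1$. That needs checking rather than asserting, because those subcases close by arithmetic of the form $gr_{k,\ell,m} - (gr_{k-1,\ell+1,m}-1) \le 6m-6$ forcing $k \le 2$, and with the sharp values the relevant difference at $k=3$ is $R_m + m - 1$; one then needs $R_m > 5m - 5$, which holds for $m \le 5$ (e.g.\ $R_5 = 21 > 20$) but fails for large $m$ since $R_m \le 4m+2$. So even granting a hypothetical sharp version of Lemma~\ref{Lemma:Smallr=0} for all $m$, the surrounding case analysis would not transfer verbatim to general $m$; new estimates would be needed.
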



\begin{thebibliography}{10}

\bibitem{AI08}
M.~Axenovich and P.~Iverson.
\newblock Edge-colorings avoiding rainbow and monochromatic subgraphs.
\newblock {\em Discrete Math.}, 308(20):4710--4723, 2008.

\bibitem{CameronEdmonds}
K.~Cameron and J.~Edmonds.
\newblock Lambda composition.
\newblock {\em J. Graph Theory}, 26(1):9--16, 1997.

\bibitem{CG83}
F.~R.~K. Chung and R.~L. Graham.
\newblock Edge-colored complete graphs with precisely colored subgraphs.
\newblock {\em Combinatorica}, 3(3-4):315--324, 1983.

\bibitem{CH72}
V.~Chv\'atal and F.~Harary.
\newblock Generalized {R}amsey theory for graphs. {II}. {S}mall diagonal
  numbers.
\newblock {\em Proc. Amer. Math. Soc.}, 32:389--394, 1972.

\bibitem{ChH2}
V.~Chv\'atal and F.~Harary.
\newblock Generalized {R}amsey theory for graphs. {II}. {S}mall diagonal
  numbers.
\newblock {\em Proc. Amer. Math. Soc.}, 32:389--394, 1972.

\bibitem{FGJM10}
R.~J. Faudree, R.~Gould, M.~Jacobson, and C.~Magnant.
\newblock Ramsey numbers in rainbow triangle free colorings.
\newblock {\em Australas. J. Combin.}, 46:269--284, 2010.

\bibitem{FGP15}
J.~Fox, A.~Grinshpun, and J.~Pach.
\newblock The {E}rd{\H o}s-{H}ajnal conjecture for rainbow triangles.
\newblock {\em J. Combin. Theory Ser. B}, 111:75--125, 2015.

\bibitem{FM11}
S.~Fujita and C.~Magnant.
\newblock Gallai-{R}amsey numbers for cycles.
\newblock {\em Discrete Math.}, 311(13):1247--1254, 2011.

\bibitem{FMO10}
S.~Fujita, C.~Magnant, and K.~Ozeki.
\newblock Rainbow generalizations of {R}amsey theory: a survey.
\newblock {\em Graphs and Combin.}, 1:1--30, 2010.

\bibitem{FMO14}
S.~Fujita, C.~Magnant, and K.~Ozeki.
\newblock Rainbow generalizations of {R}amsey theory: a dynamic survey.
\newblock {\em Theo. Appl. Graphs}, 0(1):Article 1, 2014.

\bibitem{Gallai}
T.~Gallai.
\newblock Transitiv orientierbare {G}raphen.
\newblock {\em Acta Math. Acad. Sci. Hungar}, 18:25--66, 1967.

\bibitem{GSSS10}
A.~Gy{\'a}rf{\'a}s, G.~S{\'a}rk{\"o}zy, A.~Seb{\H o}, and S.~Selkow.
\newblock Ramsey-type results for gallai colorings.
\newblock {\em J. Graph Theory}, 64(3):233--243, 2010.

\bibitem{GyarfasSimonyi}
A.~Gy\'arf\'as and G.~Simonyi.
\newblock Edge colorings of complete graphs without tricolored triangles.
\newblock {\em J. Graph Theory}, 46(3):211--216, 2004.

\bibitem{GRK4}
H.~Liu, C.~Magnant, A.~Saito, I.~Schiermeyer, and Y.~Shi.
\newblock Gallai-ramsey number for ${K}_{4}$.
\newblock {\em Submitted}.

\bibitem{RamirezReed}
J.~L. Ram{\'i}rez~Alfons{\'i}n and B.~A. Reed, editors.
\newblock {\em Perfect graphs}.
\newblock Wiley-Interscience Series in Discrete Mathematics and Optimization.
  John Wiley \& Sons, Ltd., Chichester, 2001.

\bibitem{RS78}
C.~C. Rousseau and J.~Sheehan.
\newblock Ramsey numbers for books.
\newblock In {\em Probl\`emes combinatoires et th\'eorie des graphes ({C}olloq.
  {I}nternat. {CNRS}, {U}niv. {O}rsay, {O}rsay, 1976)}, volume 260 of {\em
  Colloq. Internat. CNRS}, pages 367--369. CNRS, Paris, 1978.

\end{thebibliography}

\end{document}